\documentclass[preprint,3p,times]{elsarticle}

\usepackage{amssymb}
\usepackage{amsmath}
\usepackage{amsthm}

\usepackage{algpseudocode}
\usepackage{algorithmicx}
\usepackage{bm}
\usepackage[shortlabels]{enumitem}
\usepackage{mathtools}
\mathtoolsset{showonlyrefs}

\newcommand{\FL}[1]{\mathrm{fl}\!\left( #1 \right)}

\newcommand{\FLd}[1]{\mathrm{fl}_{\bigtriangledown}\!\left( #1 \right)}
\newcommand{\FMA}[1]{\mathrm{fma}\!\left( #1 \right)}
\newtheorem{lem}{Lemma}
\newtheorem{thm}{Theorem}
\newtheorem{algorithm}{Algorithm}
\algrenewcommand\algorithmicindent{0.5em}
\usepackage[hyphens]{url}

\journal{Nuclear Physics B}

\begin{document}

\begin{frontmatter}

%% Title, authors and addresses

%% use the tnoteref command within \title for footnotes;
%% use the tnotetext command for theassociated footnote;
%% use the fnref command within \author or \affiliation for footnotes;
%% use the fntext command for theassociated footnote;
%% use the corref command within \author for corresponding author footnotes;
%% use the cortext command for theassociated footnote;
%% use the ead command for the email address,
%% and the form \ead[url] for the home page:
%% \title{Title\tnoteref{label1}}
%% \tnotetext[label1]{}
%% \author{Name\corref{cor1}\fnref{label2}}
%% \ead{email address}
%% \ead[url]{home page}
%% \fntext[label2]{}
%% \cortext[cor1]{}
%% \affiliation{organization={},
%%             addressline={},
%%             city={},
%%             postcode={},
%%             state={},
%%             country={}}
%% \fntext[label3]{}

\title{Error Analysis of Matrix Multiplication Emulation Using Ozaki-II Scheme}

%% use optional labels to link authors explicitly to addresses:
%% \author[label1,label2]{}
%% \affiliation[label1]{organization={},
%%             addressline={},
%%             city={},
%%             postcode={},
%%             state={},
%%             country={}}
%%
%% \affiliation[label2]{organization={},
%%             addressline={},
%%             city={},
%%             postcode={},
%%             state={},
%%             country={}}

\author[label1]{Yuki Uchino} %% Author name
\author[label2]{Katsuhisa Ozaki}
\author[label1]{Toshiyuki Imamura}

%% Author affiliation
\affiliation[label1]{%
	    organization={RIKEN Center for Computational Science},%Department and Organization
            addressline={7-1-26 Minatojima-minami-machi, Chuo-ku}, 
            city={Kobe},
            postcode={650-0047}, 
            state={Hyogo},
            country={Japan}}
\affiliation[label2]{%
	    organization={Department of Mathematical Sciences, Shibaura Institute of Technology},%Department and Organization
            addressline={307 Fukasaku, Minuma-ku}, 
            city={Saitama},
            postcode={337-8570}, 
            state={Saitama},
            country={Japan}}

%% Abstract
\begin{abstract}
%% Text of abstract
The Ozaki-II scheme is an emulation method that leverages the Chinese Remainder Theorem to compute high-precision matrix multiplication via a sequence of low-precision matrix multiplications.
In this scheme, the attainable numerical accuracy improves as the number of low-precision matrix multiplications increases.
Previous numerical studies have shown that single- and double-precision matrix multiplication using the Ozaki-II scheme achieves higher throughput than that of standard BLAS routines on modern AI hardware equipped with fast INT8 matrix multiply-accumulate units with INT8 inputs and INT32 accumulation.
However, the accuracy of the Ozaki-II scheme can degrade when the exponent distribution of the input matrices is wide, in which case a large number of low-precision matrix multiplications is required to obtain high-precision results.
In this paper, we present a rigorous deterministic error analysis of the Ozaki-II scheme.
The proposed analysis not only clarifies the accuracy behavior of the method but also enables the estimation of the number of low-precision matrix multiplications required to achieve a desired level of numerical accuracy.
\end{abstract}

%%Graphical abstract
% \begin{graphicalabstract}
%\includegraphics{grabs}
% \end{graphicalabstract}

%%Research highlights
% \begin{highlights}
% \item Research highlight 1
% \item Research highlight 2
% \end{highlights}

%% Keywords
\begin{keyword}
%% keywords here, in the form: keyword \sep keyword
error analysis \sep matrix multiplication \sep emulation \sep tensor core

%% PACS codes here, in the form: \PACS code \sep code

%% MSC codes here, in the form: \MSC code \sep code
%% or \MSC[2008] code \sep code (2000 is the default)
\MSC 65G50 \sep \MSC 68W40
\end{keyword}

\end{frontmatter}

%% Add \usepackage{lineno} before \begin{document} and uncomment 
%% following line to enable line numbers
%% \linenumbers

%% main text
%%

%====================
\section{Introduction}
\label{sec:Introduction}
%====================

%-----
\subsection{Introduction}
\label{subsec:Introduction}
%-----
Recent processors exhibit exceptional performance in low-precision arithmetic, enabling substantial acceleration of machine learning workloads.
There have been dramatic improvements in sub-32-bit precision operations, which are central to AI computation, with each new processor generation.
In contrast, the performance gains for single- and double-precision floating-point operations have relatively stagnated, directly contributing to performance stagnation for classical high-performance computing.
To address this challenge, several methods have been proposed for emulating high-precision matrix multiplication using multiple low-precision matrix multiplications~\cite{bf16x9,ootomo2022Recovering,Ootomo2023Quantum,mukunoki2020,ootomo2024dgemm,uchino2025Performance,mukunoki2025dgemmfp64arithmetic,ozaki-scheme2,uchino_ozaki2,uchino_ozaki2_complex}.
Among these approaches, we focus on the Ozaki-II scheme~\cite{ozaki-scheme2,uchino_ozaki2,uchino_ozaki2_complex}, which was previously proposed by the authors.
This scheme has the potential to achieve sufficient numerical accuracy with fewer low-precision matrix multiplications than those required for other emulation techniques.
Previous studies~\cite{uchino_ozaki2,uchino_ozaki2_complex} have demonstrated that the Ozaki-II scheme achieves higher performance than that of native FP32/FP64 matrix multiplication and other emulation methods on a variety of GPUs.

In this work, we provide a detailed error analysis of the emulation of single- and double-precision general matrix--matrix multiplication (SGEMM and DGEMM, respectively) based on the Ozaki-II scheme.
This analysis clarifies the numerical reliability of the scheme and serves as a foundation for future automatic tuning of accuracy-related parameters within the Ozaki-II scheme.

%-----
\subsection{Notation}
\label{subsec:Notations}
%-----
Assume that $A(i,:) \neq \mathbf{0}^T$ for $1 \le i \le m$ and $B(:,j) \neq \mathbf{0}$ for $1 \le j \le n$; that is, we exclude trivial cases in which the matrix product contains rows or columns that are identically zero.
For $b \in \mathbb{N}$, let $\mathbb{F}_{b}$ and $\mathbb{Z}_{b}$ denote the sets of $b$-bit binary floating-point numbers and $b$-bit signed integers, respectively.
Let $u_b$ denote the unit roundoff associated with $\mathbb{F}_{b}$.
For example, $\mathbb{F}_{32}$ and $\mathbb{F}_{64}$ correspond to the sets of single- and double-precision floating-point numbers (FP32 and FP64), respectively, while $\mathbb{Z}_{8}$ corresponds to the set of signed 8-bit integers (INT8).
The corresponding unit roundoffs are $u_{32} = 2^{-24}$ for FP32 and $u_{64} = 2^{-53}$ for FP64.
For any $x \in \mathbb{R}$, the function $\mathrm{round}(x)$ rounds $x$ off to the nearest integer, and the function $\mathrm{trunc}(x)$ returns the integer part of $x$.
For any $x \in \mathbb{Z}$ and $p \in \mathbb{N}$, we write 
\[
\bmod(x,p) := x - p \cdot \mathrm{round}(x/p).
\]
Then, $-\lfloor p/2 \rfloor \le \mathrm{mod}(x,p) \le \lfloor p/2 \rfloor$ holds.
We write $\mathrm{fl}(\cdot)$ and $\mathrm{fl}_{\bigtriangledown}(\cdot)$ to denote the numerical results; all operations inside the parentheses are computed by floating-point arithmetic in round-to-nearest-even and round-down modes, respectively.
For $x \in \mathbb{F}_{32}$, $\mathrm{log2f}(x)$ means the computed result of $\log_2 x$ in FP32.
In~\cite{uchino_ozaki2,uchino_ozaki2_complex,GEMMul8}, the fast built-in $\mathtt{\_\_log2f}(x)$ function is used.
In the same manner as in~\cite{uchino_ozaki2_complex}, we assume that
\begin{equation}\label{assumption:log2}
    |\mathrm{log2f}(x) - \log_2 x| \le 4u_{32}\log_2 x,
\end{equation}
according to the official documentation of CUDA~\cite{cuda} and HIP~\cite{hip}.
For $x \in \mathbb{R}$, let $\mathrm{single}(x)$, $\mathrm{single}_{\bigtriangledown}(x)$, and $\mathrm{single}_{\bigtriangleup}(x)$ denote the rounding of $x$ to FP32 values using round-to-nearest-even, round-down, and round-up modes, respectively.
Similarly, we define $\mathrm{double}(x)$ as the rounding of $x$ to an FP64 value using the round-to-nearest-even mode.
For $x,y,z \in \mathbb{F}_b$, the function $\mathrm{fma}(x,y,z)$ calculates $xy+z$ using a fused multiply-add (FMA) operation.
Let $\mathrm{ufp}(x)$ be defined as
\[
\mathrm{ufp}(x) := 
\begin{cases}
	0 & \text{if}\ x=0, \\
	2^{\lfloor \log_2 |x| \rfloor} & \text{otherwise}
\end{cases}
\]
for $x \in \mathbb{R}$. 
The notation applies elementwise to all entries unless otherwise stated.
For example, $X \le Y$, $Z := 2^X$, and $Z := \lceil X \rceil$ for $X,Y \in \mathbb{R}^{m \times n}$ mean $x_{ij} \le y_{ij}$, $z_{ij} := 2^{x_{ij}}$, and $z_{ij} := \lceil x_{ij} \rceil$ for all $(i,j)$ pairs, respectively.
Let $E$ be the all-ones matrix of appropriate size.
In this paper, mixed-precision matrix multiply-accumulate units with INT8 inputs, INT32 accumulation, and INT32 outputs are referred to as INT8 matrix engines.

%====================
\section{Related Work}
\label{sec:Related Work}
%====================

%====================
\subsection{Long-Multiplication-Based Methods}
\label{subsec:Long-Multiplication-Based Methods}
%====================
A number of emulation techniques for high-precision matrix multiplication are based on the long-multiplication decomposition of the input matrices.
The cuMpSGEMM method~\cite{ootomo2022Recovering,Ootomo2023Quantum} emulates SGEMM using either FP16 or TensorFloat-32 (TF32).
The BF16x9 algorithm~\cite{bf16x9} similarly emulates SGEMM using bfloat16 (BF16).
The Ozaki scheme (also referred to as the Ozaki-I scheme)~\cite{ozaki2012error,ozaki2013generalization} provides a general framework for arbitrary-precision matrix multiplication.
Several works have adapted this framework to emulate DGEMM using INT8, FP8, or FP16 arithmetic~\cite{mukunoki2020,ootomo2024dgemm,uchino2025Performance,mukunoki2025dgemmfp64arithmetic}.
These methods decompose the input matrices $A\in\mathbb{R}^{m\times k}$, $B\in\mathbb{R}^{k\times n}$ into unevaluated sums of low-precision matrices by slicing their significands:
\begin{equation}\label{eq:slice}
A \to A_1 + A_2 + \dots + A_d,\qquad
B \to B_1 + B_2 + \dots + B_d,
\end{equation}
where each slice is represented in a low-precision format through appropriate diagonal scaling.
An approximation of $AB$ can be obtained by computing these low-precision products.
In cuMpSGEMM, $A$ and $B$ are each decomposed into two FP16/TF32 matrices and a residual term, and FP32-accumulated matrix multiplications combined with an error correction step reproduce FP32 accuracy.
The BF16x9 algorithm decomposes each matrix into three BF16 matrices and achieves FP32 accuracy using FP32 accumulation.
In the Ozaki-I scheme, the matrices are sliced such that all slices share a common exponent range.
Consequently, the number of slices $d$ required to reach native FP64 accuracy depends on the exponent distribution of the input matrices.
A larger $d$ reduces the truncation error in~\eqref{eq:slice} and therefore yields higher accuracy.
Several error analyses of the Ozaki-I scheme have been published~\cite{uchino2025Performance, ozaki2012error, abdelfattah2025analysis, Abdelfattah2025Householder}.

%====================
\subsection{CRT-Based Methods}
\label{subsec:CRT-Based Methods}
%====================
Recently, we proposed an alternative framework for arbitrary-precision matrix multiplication emulation, namely the Ozaki-II scheme, based on the Chinese Remainder Theorem (CRT)~\cite{ozaki-scheme2}.
Building on this framework, emulation methods for SGEMM and DGEMM using INT8 matrix engines have been developed~\cite{uchino_ozaki2,uchino_ozaki2_complex,GEMMul8}.
Previous numerical studies~\cite{uchino_ozaki2,uchino_ozaki2_complex} demonstrated that the Ozaki-II scheme outperforms native general matrix--matrix multiplication routines and existing emulation approaches in both performance and power efficiency on a variety of GPUs.
The emulation methods based on the Ozaki-II scheme provide two computing modes: fast mode and accurate mode.
Although the fast mode has higher throughput than that of the accurate mode, its achievable accuracy strongly depends on the exponent distribution of the input matrices. 
In particular, when the exponent range is large, the fast mode fails to deliver accuracy comparable to that of native FP32 or FP64 computations.
In contrast, the accurate mode maintains stable accuracy even for inputs with a wide exponent distribution, making it suitable for general-purpose use.
For this reason, the present study focuses on the accurate mode.
The Ozaki-II scheme is a relatively recent technique and thus its numerical error characteristics have not yet been systematically analyzed.
In this work, we provide a detailed error analysis of SGEMM and DGEMM emulation based on the Ozaki-II scheme in the accurate mode.

The Ozaki-II scheme is based on the traditional method for computing high-precision integer matrix multiplication using the CRT.
The CRT provides a method for reconstructing the solutions of a sequence of congruence equations with pairwise coprime moduli from its residues, as shown in Theorem~\ref{thm:crt}.

\begin{thm}[Chinese Remainder Theorem]\label{thm:crt}
Let $x \in \mathbb{Z}$.
Suppose that $p_1,\dots,p_N \in \mathbb{N}_{\ge 2}$ are pairwise coprime integers and $\mathcal{P} := \prod_{1 \le \ell \le N}{p_\ell}$.
For $\ell=1,\dots,N$, define $q_\ell \in \mathbb{N}$ as modular multiplicative inverses of $\mathcal{P}/p_\ell$ (i.e., $\mathcal{P}/p_\ell \cdot q_\ell \equiv 1 \bmod p_\ell$).
Let $y_\ell\in \mathbb{Z}$ for $\ell=1,\dots,N$ be such that
\begin{equation}\label{eq:system}
\begin{cases}
    x \equiv y_1 \mod{p_1},\\
    \quad \vdots\\
    x \equiv y_N \mod{p_N}.
\end{cases}
\end{equation}
Then, it holds that
\begin{equation}\label{eq:xequiv}
  x \equiv \sum_{\ell=1}^{N} \frac{\mathcal{P}}{p_\ell} q_\ell y_\ell \mod{\mathcal{P}}.
\end{equation}
\end{thm}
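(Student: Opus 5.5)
Set $X := \sum_{\ell=1}^{N} \frac{\mathcal{P}}{p_\ell} q_\ell y_\ell$. The plan is to show that $x - X$ is divisible by every $p_j$, and then use pairwise coprimality to conclude that it is divisible by $\mathcal{P}$, which is exactly \eqref{eq:xequiv}. Everything reduces to working modulo a single $p_j$.

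First I fix $j \in \{1,\dots,N\}$ and reduce $X$ modulo $p_j$. For $\ell \neq j$, the factor $p_j$ appears in $\mathcal{P}/p_\ell = \prod_{i\neq \ell} p_i$, so $\frac{\mathcal{P}}{p_\ell} q_\ell y_\ell \equiv 0 \pmod{p_j}$. For $\ell = j$, the defining property of $q_j$ gives $\frac{\mathcal{P}}{p_j} q_j \equiv 1 \pmod{p_j}$, so that term is congruent to $y_j$. Hence $X \equiv y_j \pmod{p_j}$. Combining this with the $j$-th line of \eqref{eq:system}, $x \equiv y_j \pmod{p_j}$, gives $p_j \mid x - X$ for every $j$.

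Next I lift from the individual moduli to $\mathcal{P}$. I would argue by induction on $j$ that $p_1 p_2 \cdots p_j \mid x - X$. In the inductive step, $p_{j+1}$ is coprime to each of $p_1,\dots,p_j$, hence coprime to their product $M_j := p_1\cdots p_j$. This holds because a prime dividing both $p_{j+1}$ and $M_j$ would divide some $p_i$ with $i\le j$, contradicting pairwise coprimality. So if $M_j \mid x - X$ and $p_{j+1} \mid x - X$, I write $x - X = M_j t$. Since $p_{j+1} \mid M_j t$ and $\gcd(p_{j+1}, M_j)=1$, Euclid's lemma (equivalently Bézout) gives $p_{j+1} \mid t$, so $M_{j+1} \mid x - X$. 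At $j=N$ this yields $\mathcal{P} \mid x - X$, which is \eqref{eq:xequiv}.

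The only step needing any care is this passage from divisibility by each $p_j$ to divisibility by the full product, since it is where pairwise coprimality is genuinely used. It is a standard elementary-number-theory fact, so I expect no real obstacle. The existence of the inverses $q_\ell$ is assumed in the statement, so it needs no argument here, although it also follows from $\gcd(\mathcal{P}/p_\ell, p_\ell)=1$ by the same coprimality reasoning.
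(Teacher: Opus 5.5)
Your proof is correct and complete. The paper gives no proof of its own: it quotes this as the classical Chinese Remainder Theorem. Your argument is the standard one, showing that $X \equiv y_j \equiv x \pmod{p_j}$ because every term except the $j$-th vanishes modulo $p_j$, and then lifting divisibility by each $p_j$ to divisibility by $\mathcal{P}$ through pairwise coprimality.
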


Let $p_\ell \in \mathbb{N}_{\ge 2}$, $\mathcal{P} \in \mathbb{N}$, and $q_\ell \in \mathbb{N}$ be defined as in Theorem~\ref{thm:crt} for $\ell=1,\dots,N$.
In Theorem~\ref{thm:crt}, the solution $x$ can be represented as 
\[
x = \bmod\left( \sum_{\ell=1}^{N} \frac{\mathcal{P}}{p_\ell} q_\ell y_\ell, \mathcal{P} \right) + z\mathcal{P},\quad
z \in \mathbb{Z}.
\]
If $2|x| < \mathcal{P}$, $x$ becomes unique; that is, 
\[
x = \bmod\left( \sum_{\ell=1}^{N} \frac{\mathcal{P}}{p_\ell} q_\ell y_\ell, \mathcal{P} \right)
\]
holds.
For given matrices $A \in \mathbb{R}^{m \times k}$ and $B \in \mathbb{R}^{k \times n}$, the following is an overview of the Ozaki-II scheme:
\begin{enumerate}[Step~1),leftmargin=*,ref=\arabic*]
    \item\label{step1} Determine pairwise coprime integers $p_1,\dots,p_N \in \mathbb{N}_{\ge 2}$ and calculate $\mathcal{P} := \prod_{1 \le \ell \le N}{p_\ell} \in \mathbb{N}$ and modular multiplicative inverses $q_\ell \in \mathbb{N}$ of $\mathcal{P}/p_\ell$, which satisfy $q_\ell < p_\ell$.
    
    \item\label{step2} Apply diagonal scaling and truncation to convert $A$ and $B$ to 
    \begin{alignat}{2}
        A' &:= \mathrm{trunc}(\mathrm{diag}(2^\mu)\cdot A) \in \mathbb{Z}^{m \times k},&\quad \mu &\in \mathbb{Z}^{m},\label{eq:A'trunc}\\
        B' &:= \mathrm{trunc}(B\cdot \mathrm{diag}(2^\nu)) \in \mathbb{Z}^{k \times n},&\quad \nu &\in \mathbb{Z}^{n},\label{eq:B'trunc}
    \end{alignat}
    respectively, where $\mu$ and $\nu$ are chosen to satisfy
    \begin{equation}\label{eq:conditionofCRT}
        2|A'||B'| < \mathcal{P}E.
    \end{equation}
    
    \item\label{step3} Compute $C'' := A'B'$ via the CRT as
    \begin{align}
        A'_\ell &:= \mathrm{mod}(A',p_\ell),\quad B'_\ell := \mathrm{mod}(B',p_\ell),\label{eq:A'_ellB'_ell}\\
        C'_\ell &:= A'_\ell \cdot B'_\ell,\label{eq:C'_ell}\\
        C' &:= \sum_{\ell=1}^{N} \frac{\mathcal{P}}{p_\ell} q_\ell\cdot C'_\ell,\label{eq:C'}\\
        C'' &:=  \mathrm{mod}(C',\mathcal{P}).\label{eq:C''}
    \end{align}

    \item\label{step4} Inversely scale $C''$ as
    \[
        C := \mathrm{diag}(2^{-\mu}) \cdot C'' \cdot \mathrm{diag}(2^{-\nu}).
    \]
\end{enumerate}
The pairwise coprime integers $p_\ell \in \mathbb{N}_{\ge 2}$ are set as large as possible so that no rounding error occurs in the matrix multiplication $A'_\ell \cdot B'_\ell$ in~\eqref{eq:C'_ell}.
Condition~\eqref{eq:conditionofCRT} implies that the final reduction~\eqref{eq:C''} of the CRT produces a unique result.
The accumulation in~\eqref{eq:C'} should be performed in high precision.
Increasing $N$ enlarges $\mathcal{P}$ in~\eqref{eq:conditionofCRT}, which reduces the truncation error in~\eqref{eq:A'trunc} and~\eqref{eq:B'trunc}. 
Therefore, the accuracy of $C \approx AB$ depends on the number of moduli.

%====================
\section{Ozaki-II Scheme Using INT8 Matrix Engines}
\label{sec:Ozaki-II scheme using INT8 Matrix Engines}
%====================

We now introduce SGEMM and DGEMM emulation using INT8 matrix engines~\cite{uchino_ozaki2}.
We assume that $A \in \mathbb{F}_b^{m \times k}$ and $B \in \mathbb{F}_b^{k \times n}$ for $b \in \{32, 64\}$ and $k \le 2^{17}$.
Algorithm~\ref{alg:emu} shows the outline of the emulation.
This section describes the details of Algorithm~\ref{alg:emu}.

\begin{algorithm}[\cite{ozaki-scheme2,uchino_ozaki2,uchino_ozaki2_complex}]\label{alg:emu}
Let $A \in \mathbb{F}_b^{m \times k}$ and $B \in \mathbb{F}_b^{k \times n}$ for $b \in \{32, 64\}$ and $k \le 2^{17}$.
Let $p_1,\dots,p_N \in \mathbb{N}_{\ge 2}$ be pairwise coprime integers for $N \le 49$.
Assume that $p_\ell \le 256$ for $1 \le \ell \le N$.
The following algorithm computes $C \approx AB$ via the CRT using INT8 matrix engines.

\begin{algorithmic}[1]\setlength{\itemsep}{3pt}
\Function{$C := \bm{\mathit{OS\_II}}$}{$A,B,p$}

\State $[A',B',\mu,\nu] := \bm{\mathit{Scaling}}(A,B,p)$
\Comment{Algorithm~\ref{alg:scaling}, corresponding to Step~\ref{step2} in Section~\ref{subsec:CRT-Based Methods}}

\State $C'' := \bm{\mathit{CRT}}(A',B',p)$
\Comment{Algorithm~\ref{alg:crt}, corresponding to Step~\ref{step3} in Section~\ref{subsec:CRT-Based Methods}}

\State $C := \mathrm{fl}(\mathrm{diag}(2^{-\mu}) \cdot C'' \cdot \mathrm{diag}(2^{-\nu}))$
\Comment{$C \in \mathbb{F}_{b}^{m \times n}$, corresponding to Step~\ref{step4} in Section~\ref{subsec:CRT-Based Methods}}

\EndFunction
\end{algorithmic}
\end{algorithm}

%====================
\subsection{Determination of Constants}
\label{subsec:Determination of Constants}
%====================
The pairwise coprime integers $p_\ell$ are set as $p_\ell \le 256$, and thus $N \le 49$.
From the definition of $\mathrm{mod}$, $-\lfloor p_\ell/2 \rfloor \le \bmod(x,p_\ell) \le \lfloor p_\ell/2 \rfloor$ holds for $x \in \mathbb{Z}$.
When $p_\ell < 256$, $\bmod(x,p_\ell)$ can be held as an INT8 format value.
When $p_\ell = 256$, $\bmod(x,p_\ell)$ can be $128$, but this is not an issue because casting to INT8 wraps it around to $-128$ and $128 \equiv -128 \bmod 256$ holds.
In~\cite{GEMMul8}, $p$ is fixed as
\begin{equation}\label{p_list}
\begin{array}{rrrrrrrrrr@{}l}
p=(256, & 255, & 253, & 251, & 247, &
   241, & 239, & 233, & 229, & 227, \\
   223, & 217, & 211, & 199, & 197, &
   193, & 191, & 181, & 179, & 173, \\
   167, & 163, & 157, & 151, & 149, &
   139, & 137, & 131, & 127, & 113, \\
   109, & 107, & 103, & 101, &  97, &
    89, &  83, &  79, &  73, &  71, \\
    67, &  61, &  59, &  53, &  47, &
    43, &  41, &  37, &  29\phantom{,} &  &  )^T.
\end{array}
\end{equation}
In the implementation, not only the moduli $p$ but also constants related to $p$ are precomputed and stored as lookup tables.
This design choice reduces runtime overhead and contributes to the high efficiency of the emulation.
Note that the moduli $p$ that maximize $\mathcal{P}$ depend on $N$.
For instance, when $N \ge 6$, 255 results are excluded in a slightly larger $\mathcal{P}$; however, the difference is practically negligible for the accuracy of the final results in most cases.
In principle, separate lookup tables optimized for each value of $N$ can be constructed to further refine the implementation.
In this paper, we assume that $p$ is defined as in~\eqref{p_list}.

Both $\mathcal{P} \in \mathbb{N}$ and $\mathcal{P}/p_\ell \cdot q_\ell \in \mathbb{N}$ are prepared in high-precision format because $\mathrm{mod}(C',\mathcal{P}) = C' - \mathcal{P}\cdot \mathrm{round}(1/\mathcal{P} \cdot C')$ in \eqref{eq:C''} involves subtracting two nearly equal quantities, which introduces cancellation.
Therefore, \eqref{eq:C'} and \eqref{eq:C''} must be computed in high precision, since the output of \eqref{eq:C'} is directly used in \eqref{eq:C''}.
$\mathcal{P} \in \mathbb{N}$ is approximated by a double-double number $\mathcal{P}_1 + \mathcal{P}_2 \approx \mathcal{P}$ with $\mathcal{P}_1 = \mathrm{double}(\mathcal{P})$. 
We set $\mathcal{P}_2 := 0$ in SGEMM emulation and $\mathcal{P}_2 := \mathrm{double}(\mathcal{P} - \mathcal{P}_1)$ in DGEMM emulation.
$\mathcal{P}/p_\ell \cdot q_\ell$ is approximated using two double-precision floating-point numbers $s_{\ell 1},s_{\ell 2} \in \mathbb{F}_{64}$ in the form
\begin{equation}\label{def:s1s2}
\frac{\mathcal{P}}{p_\ell} \cdot q_\ell \approx s_{\ell 1} + s_{\ell 2}.
\end{equation}
In DGEMM emulation, $s_{\ell 1}$ retains only the upper $\beta_\ell$ bits of $\mathcal{P}/p_\ell \cdot q_\ell$ for
\[
\beta_\ell := 53 - \lceil \log_2 \rho \rceil + \left\lfloor \log_2 \frac{\mathcal{P}}{p_\ell}q_\ell \right\rfloor - \left\lfloor \log_2 \max_{1 \le h \le N} \frac{\mathcal{P}}{p_h}q_h \right\rfloor
\]
with $\rho := \sum_{\ell =1}^N \lfloor p_\ell/2 \rfloor$, while $s_{\ell 2}:= \mathrm{double}(\mathcal{P}/p_\ell \cdot q_\ell - s_{\ell 1})$, as shown in Figure~\ref{fig:img_s1s2}.
Note that in~\cite{uchino_ozaki2_complex}, $7 + \lceil \log_2 N \rceil$ is used instead of $\lceil \log_2 \rho \rceil$; however, using $\rho$ provides more accurate results from $\lceil \log_2 \rho \rceil \le 7 + \lceil \log_2 N \rceil$.
In SGEMM emulation, we set $s_{\ell 1} := \mathrm{double}(\mathcal{P}/p_\ell \cdot q_\ell)$ and $s_{\ell 2} := 0$.
\begin{figure}[htbp]
    \centering
    \includegraphics[width=.6\hsize]{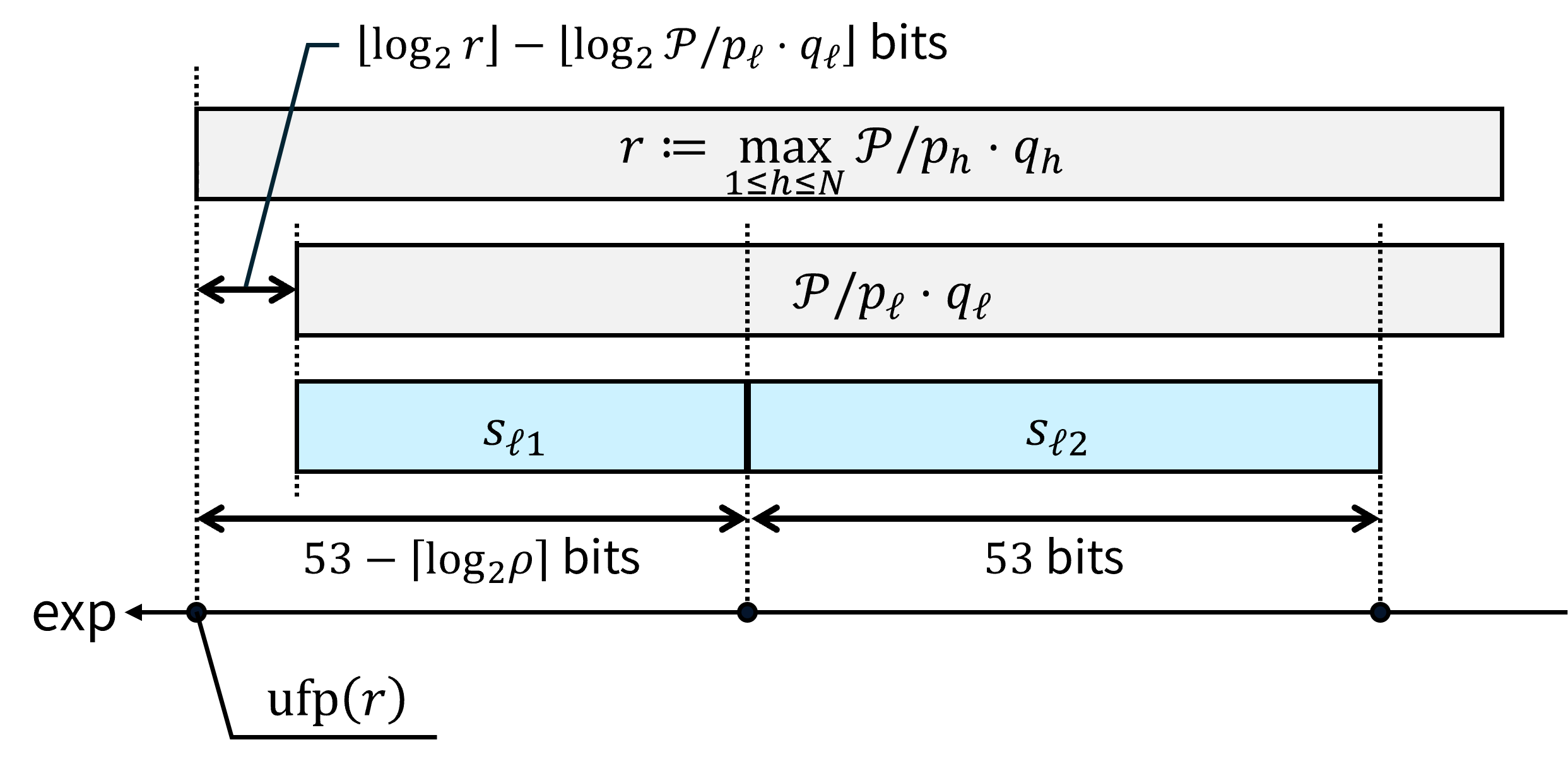}
    \caption{Diagram of $s_{\ell 1}$ and $s_{\ell 2}$.}
    \label{fig:img_s1s2}
\end{figure}

%====================
\subsection{Determination of Scaling Vectors}
\label{subsec:Determination of Scaling Vectors}
%====================
In Step~\ref{step2}, input floating-point matrices $A$ and $B$ are converted, using the scaling vectors $\mu \in \mathbb{Z}^m$ and $\nu \in \mathbb{Z}^n$, to integer matrices $A' \in \mathbb{F}_b^{m \times k} \cap \mathbb{Z}^{m \times k}$ and $B' \in \mathbb{F}_b^{k \times n} \cap \mathbb{Z}^{k \times n}$, respectively, such that condition~\eqref{eq:conditionofCRT} is satisfied.
From~\eqref{eq:A'trunc} and~\eqref{eq:B'trunc}, it holds that
\begin{equation}\label{eq:1}
|A'||B'| \le \mathrm{diag}(2^\mu)\cdot |A| |B| \cdot \mathrm{diag}(2^\nu).
\end{equation}
For $\mu' \in \mathbb{Z}^m$ and $\nu' \in \mathbb{Z}^n$, let 
\[
\bar{A} := \lceil \mathrm{diag}(2^{\mu'}) \cdot |A| \rceil \in \mathbb{Z}^{m \times k},\quad 
\bar{B} := \lceil |B| \cdot \mathrm{diag}(2^{\nu'}) \rceil \in \mathbb{Z}^{k \times n}.
\]
We define $\mu'$ and $\nu'$ as 
\begin{equation}\label{eq:mu'nu'}
\mu'_i := 5 - \left\lfloor \log_2 \max_{h}|a_{ih}| \right\rfloor,\quad
\nu'_j := 5 - \left\lfloor \log_2 \max_{h}|b_{hj}| \right\rfloor.
\end{equation}
Then, $0 \le \bar{a}_{ij}, \bar{b}_{ij} \le 2^6$ for all $(i,j)$ pairs.
Note that $\mu' \in \mathbb{Z}_{16}^m$ and $\nu' \in \mathbb{Z}_{16}^n$ because $\mu'_i, \nu'_j \in [-1018,1079] \cap \mathbb{Z}$ holds for all $(i,j)$ pairs from $\lfloor \log_2 |x| \rfloor \in [-1074,1023] \cap \mathbb{Z}$ for $x \in \mathbb{F}_{b}$ for $b \in \{32,64\}$.
Then,
\begin{equation}\label{eq:2}
|A| |B| \le \mathrm{diag}(2^{-\mu'})\cdot \bar{A} \bar{B} \cdot \mathrm{diag}(2^{-\nu'}),
\end{equation}
and $\bar{C} := \bar{A} \bar{B}$ is computed without error using INT8 matrix engines because we have
\begin{equation}\label{eq:2^29}
(\bar{A} \bar{B})_{ij} 
= \sum_{h=1}^k \bar{a}_{ih} \bar{b}_{hj} 
\le \sum_{h=1}^k (2^6)(2^6)
\le 2^{12}k
\le 2^{29}
< 2^{31}.
\end{equation}
From~\eqref{eq:1} and~\eqref{eq:2}, for $\bar{D} := \mathrm{single}_{\bigtriangleup}(\bar{C}) \in \mathbb{F}_{32}^{m \times n}$, we have
\begin{equation}\label{eq:3}
|A'||B'| 
\le \mathrm{diag}(2^{\mu-\mu'}) \cdot \bar{D} \cdot \mathrm{diag}(2^{\nu-\nu'}).
\end{equation}
To satisfy condition~\eqref{eq:conditionofCRT}, we can therefore determine $\mu \in \mathbb{Z}^m$ and $\nu \in \mathbb{Z}^n$ so that the following holds:
\begin{equation}\label{eq:|A'||B'|<=P/2}
    \mathrm{diag}(2^{\mu-\mu'}) \cdot \bar{D} \cdot \mathrm{diag}(2^{\nu-\nu'})
    < \frac{\mathcal{P}}{2}E.
\end{equation}
Algorithm~\ref{alg:scaling} shows the detailed computations.
The fact that the scaling vectors $\mu$ and $\nu$, defined as on lines~\ref{alg:emu:mu} and~\ref{alg:emu:nu}, respectively, satisfy condition~\eqref{eq:|A'||B'|<=P/2} is formalized as Lemma~\ref{lem0} in Section~\ref{sec:Theoretical Results}.
Its proof is provided in Section~\ref{sebsec:Proof of Lemma1}.
Note that $\mu \in \mathbb{Z}_{16}^m$ and $\nu \in \mathbb{Z}_{16}^n$ because $e_i,f_j < 31$ for all $(i,j)$ pairs and $\mathcal{P}' < 200$ for $N \le 49$.

\begin{algorithm}[\cite{uchino_ozaki2,uchino_ozaki2_complex}]\label{alg:scaling}
Let $A \in \mathbb{F}_b^{m \times k}$ and $B \in \mathbb{F}_b^{k \times n}$ for $b \in \{32, 64\}$.
Let $p \in \mathbb{N}_{\ge 2}^N$ with $49 \ge N \in \mathbb{N}_{\ge 2}$.
Assume that $k \le 2^{17}$.
The following algorithm produces $A' \in \mathbb{F}_b^{m \times k} \cap \mathbb{Z}^{m \times k}$ and $B' \in \mathbb{F}_b^{k \times n} \cap \mathbb{Z}^{k \times n}$ that satisfy~\eqref{eq:conditionofCRT}.

\begin{algorithmic}[1]\setlength{\itemsep}{3pt}
\Function{$[A',B',\mu,\nu] := \bm{\mathit{Scaling}}$}{$A,B,p$}
\State $\mathcal{P}' := \mathrm{single}_{\bigtriangledown}(\log_2(\prod_{\ell=1}^N p_\ell - 1)/2-0.5)$\label{alg:emu:P'}
\Comment{$\mathcal{P}' \in \mathbb{F}_{32}$}
\State $\mu'_i := 5 - \lfloor \log_2 \max_{h}|a_{ih}| \rfloor\ \forall i$\label{alg:emu:mu'}
\Comment{$\mu' \in \mathbb{Z}_{16}^{m}$}
\State $\bar{A} := \lceil\mathrm{diag}(2^{\mu'})\cdot |A|\rceil$\label{alg:emu:Abar}
\Comment{$\bar{A} \in \mathbb{Z}_8^{m \times k}$}
\State $\nu'_j := 5 - \left\lfloor \log_2 \max_{h}|b_{hj}| \right\rfloor\ \forall j$\label{alg:emu:nu'}
\Comment{$\nu' \in \mathbb{Z}_{16}^n$}
\State $\bar{B} := \lceil |B|\cdot \mathrm{diag}(2^{\nu'}) \rceil$\label{alg:emu:Bbar}
\Comment{$\bar{B} \in \mathbb{Z}_8^{k \times n}$}
\State $\bar{C} := \bar{A}\cdot \bar{B}$ using INT8 matrix engines
\Comment{$\bar{C} \in \mathbb{Z}_{32}^{m \times n}$}
\State $\bar{D} := \mathrm{single}_{\bigtriangleup}(\bar{C})$ using round-up mode\label{alg:emu:Dbar}
\Comment{$\bar{D} \in \mathbb{F}_{32}^{m \times n} \cap \mathbb{Z}^{m \times n}$}
\State $e_i := \mathrm{log2f}(\max_{h}\bar{d}_{ih})\ \forall i$\label{alg:scaling:log1}
\Comment{$e \in \mathbb{F}_{32}^m$}
\State $f_j := \mathrm{log2f}(\max_{h}\bar{d}_{hj})\ \forall j$\label{alg:scaling:log2}
\Comment{$f \in \mathbb{F}_{32}^n$}
\State $\mu_i := \mu'_i + \mathrm{fl}_{\bigtriangledown}(\left\lfloor \mathrm{fma}(\mathrm{single}_{\bigtriangledown}(-0.5/(1-4u_{32})), e_i,\mathcal{P}') \right\rfloor)\ \forall i$\label{alg:emu:mu}
\Comment{$\mu \in \mathbb{Z}_{16}^{m}$}
\State $\nu_j := \nu'_j + \mathrm{fl}_{\bigtriangledown}(\left\lfloor \mathrm{fma}(\mathrm{single}_{\bigtriangledown}(-0.5/(1-4u_{32})), f_j,\mathcal{P}') \right\rfloor)\ \forall j$\label{alg:emu:nu}
\Comment{$\nu \in \mathbb{Z}_{16}^n$}
\State $A' := \mathrm{trunc}( \mathrm{diag}(2^\mu)\cdot A)$\label{alg:emu:1}
\Comment{$A' \in \mathbb{F}_b^{m \times k} \cap \mathbb{Z}^{m \times k}$}
\State $B' := \mathrm{trunc}(B\cdot \mathrm{diag}(2^\nu))$\label{alg:emu:2}
\Comment{$B' \in \mathbb{F}_b^{k \times n} \cap \mathbb{Z}^{k \times n}$}
\EndFunction
\end{algorithmic}
\end{algorithm}

%====================
\subsection{Matrix Multiplication via INT8}
\label{subsec:Matrix Multiplication via INT8}
%====================
Let $A' \in \mathbb{F}_b^{m \times k} \cap \mathbb{Z}^{m \times k}$ and $B' \in \mathbb{F}_b^{k \times n} \cap \mathbb{Z}^{k \times n}$ be obtained using Algorithm~\ref{alg:scaling}.
Next, we convert $A'$ and $B'$ to INT8 matrices $A'_\ell$ and $B'_\ell$ as in~\eqref{eq:A'_ellB'_ell}, and let $C'_\ell := A'_\ell B'_\ell$ as in~\eqref{eq:C'_ell}.
When $p_\ell < 256$, $A'_\ell B'_\ell$ is computed without error because $\sum_{h=1}^k |(A'_\ell)_{ih}| |(B'_\ell)_{hj}| < 2^{31}$.
When $p_\ell = 256$, $(A'_\ell B'_\ell)_{ij}$ can be $2^{31}$; however, casting to INT32 wraps it around to $-2^{31}$ and $2^{31} \equiv -2^{31} \bmod 256$ holds.
After computing $C'_\ell$, we convert them to INT8 matrices $W_\ell := \bmod(C'_\ell,p_\ell) \in \mathbb{I}_{8}^{m \times n}$ to reduce the arithmetic precision of the accumulation in~\eqref{eq:C'} and thus mitigate the performance bottleneck; then, we accumulate them as $C' := \sum_{\ell=1}^{N} \mathcal{P}/p_\ell \cdot q_\ell\cdot W_\ell$ instead of~\eqref{eq:C'}.
Using $s_{\ell 1},s_{\ell 2} \in \mathbb{F}_{64}$ in Section~\ref{subsec:Determination of Constants}, $C' \approx C'^{(1)} + C'^{(2)}$ is computed as follows using FP64 arithmetic:
\begin{equation*}
    C'^{(1)} := \FL{\sum_{\ell=1}^{N}s_{\ell 1}W_\ell},\quad
    C'^{(2)} := \FL{\sum_{\ell=1}^{N}s_{\ell 2}W_\ell}.
\end{equation*}
In DGEMM emulation, no rounding error occurs in the computation of $C'^{(1)}$; the proof is provided in Section~\ref{sebsec:Proof of Lemma3}.
After the accumulation, we compute 
\begin{equation}\label{eq:Q}
    Q := \mathrm{round}\left(\FL{\mathcal{P}_{inv} \cdot C'^{(1)}}\right) = \mathrm{round}(C'/\mathcal{P}),
\end{equation}
where $\mathcal{P}_{inv} := \mathrm{double}(\mathcal{P}^{-1})$.
Then, for a double-double number $\mathcal{P}_1 + \mathcal{P}_2 \approx \mathcal{P}$ defined in Section~\ref{subsec:Determination of Constants}, we perform the final reduction of the CRT as
\[
    C'' := \FL{\FMA{-\mathcal{P}_2,Q,\FMA{-\mathcal{P}_1,Q,C'^{(1)}} + C'^{(2)}}} \approx C' - \mathcal{P}Q.
\]
Algorithm~\ref{alg:crt} shows the outline of computing $C'' \approx A'B'$ using INT8 matrix engines via the CRT.

\begin{algorithm}[\cite{uchino_ozaki2,uchino_ozaki2_complex}]\label{alg:crt}
Let $A' \in \mathbb{F}_b^{m \times k} \cap \mathbb{Z}^{m \times k}$ and $B' \in \mathbb{F}_b^{k \times n} \cap \mathbb{Z}^{k \times n}$ for $b \in \{32, 64\}$.
Let $p \in \mathbb{N}^N$ with $49 \ge N \in \mathbb{N}_{\ge 2}$.
Assume that $k \le 2^{17}$ and~\eqref{eq:conditionofCRT} hold.
The following algorithm computes $C'' \approx A'B'$ using INT8 matrix engines via the CRT.

\begin{algorithmic}[1]\setlength{\itemsep}{3pt}
\Function{$C'' := \bm{\mathit{CRT}}$}{$A',B',p$}
\State $\mathcal{P}_1 + \mathcal{P}_2 \approx \mathcal{P}$
\Comment{$\mathcal{P}_1, \mathcal{P}_2 \in \mathbb{F}_{64}$. $\mathcal{P}_2 = 0$ if $b = 32$}
\State $s_{\ell 1} + s_{\ell_2} \approx \mathcal{P}/p_\ell \cdot q_\ell$ for $1 \le \ell \le N$\label{alg:emu:s1s2}
\Comment{$s_{\ell 1},s_{\ell 2} \in \mathbb{F}_{64}$. $s_{i2} = 0$ if $b = 32$}
\State $\mathcal{P}_{inv} := \mathrm{double}(\mathcal{P}^{-1})$
\Comment{$\mathcal{P}_{inv} \in \mathbb{F}_{64}$}

\State $A'_\ell := \mathrm{mod}(A',p_i)$ for $1 \le \ell \le N$\label{alg:emu:3}
\Comment{$A'_\ell \in \mathbb{Z}_8^{m \times k}$}

\State$B'_\ell := \mathrm{mod}(B',p_i)$ for $1 \le \ell \le N$\label{alg:emu:4}
\Comment{$B'_\ell \in \mathbb{Z}_8^{k \times n}$}

\State $C'_\ell := A'_\ell B'_\ell$ for $1 \le \ell \le N$ using INT8 matrix engines
\Comment{$C'_\ell \in \mathbb{Z}_{32}^{m \times n}$}
\State $W_\ell := \bmod(C'_\ell,p_\ell)$ for $1 \le \ell \le N$\label{alg:emu:6}
\Comment{$W_\ell \in \mathbb{Z}_{8}^{m \times n}$}

\State $C'^{(1)} := \mathrm{fl}(\sum_{\ell=1}^N s_{\ell1} W_\ell)$
\Comment{$C'^{(1)} \in \mathbb{F}_{64}^{m \times n}$}
\State $C'^{(2)} := \mathrm{fl}(\sum_{\ell=1}^N s_{\ell2} W_\ell)$
\Comment{$C'^{(2)} \in \mathbb{F}_{64}^{m \times n}$}

\State $Q := \mathrm{round}(\mathrm{fl}(\mathcal{P}_{inv} \cdot C'^{(1)}))$
\Comment{$Q \in \mathbb{F}_{64}^{m \times n} \cap \mathbb{Z}^{m \times n}$}
\State $C'' := \mathrm{fl}(\mathrm{fma}(-Q, \mathcal{P}_2,\mathrm{fma}(-Q,\mathcal{P}_1,C'^{(1)}) + C'^{(2)}))$ 
\Comment{$C'' \in \mathbb{F}_{64}^{m \times n} \cap \mathbb{Z}^{m \times n}$}
\State $\bm{\mathit{if}}\ b=32,\ C'' := \mathrm{single}(C'')\ \bm{\mathit{end\ if}}$
\Comment{$C'' \in \mathbb{F}_{b}^{m \times n} \cap \mathbb{Z}^{m \times n}$}
\EndFunction
\end{algorithmic}
\end{algorithm}

%====================
\section{Theoretical Results}
\label{sec:Theoretical Results}
%====================

We provide the following theorem, which gives the error bound of the final result of DGEMM and SGEMM emulation.
\begin{thm}\label{thm:err}
    Let $p \in \mathbb{N}^N$ be pairwise coprime integers with $49 \ge N \in \mathbb{N}_{\ge 2}$.
    Assume that $k \le 2^{17}$ and $p_\ell \le 256$.
    Let $\mathcal{P} := \prod_{\ell=1}^N p_\ell$ and $\rho := \sum_{\ell=1}^N \lfloor p_\ell/2 \rfloor$.
    For $A \in \mathbb{F}_{b}^{m \times k}$ and $B \in \mathbb{F}_{b}^{k \times n}$ for $b \in \{32,64\}$, let $C_b$ be the result obtained by Algorithm~\ref{alg:emu}.
    Assume that $A(i,:) \neq \mathbf
    {0}^T$ for $1 \le i \le m$ and $B(:,j) \neq \mathbf{0}$ for $1 \le j \le n$.
    Let $\bar{C} \in \mathbb{Z}_{32}^{m \times n}$ be the quantity calculated in Algorithm~\ref{alg:scaling}. 
    Define $\alpha' \in \mathbb{Z}^{m}$ and $\beta' \in \mathbb{Z}^n$ as
    \[
        \alpha' := \alpha + \frac{1}{2}e',\quad
        \beta' := \beta + \frac{1}{2}f'
    \]
    for $\alpha,e' \in \mathbb{Z}^{m}$ and $\beta,f' \in \mathbb{Z}^n$ with 
    \begin{align*}
        \alpha_i := \lfloor \log_2 \max_{1 \le h \le k}|a_{ih}| \rfloor,\quad
        e'_i := \log_2\max_{1 \le h \le n}\bar{c}_{ih},\quad
        \beta_j := \lfloor \log_2 \max_{1 \le h \le k}|b_{hj}| \rfloor,\quad
        f'_j := \log_2\max_{1 \le h \le m}\bar{c}_{hj}.
    \end{align*}
    Let
    \begin{align*}
        R_{32} &:= (1 + u_{32})(N+2)u_{64}\rho\mathcal{P}E + u_{32}|A'B'|,\\
        R_{64} &:= (1 + 3u_{64})2^{1+\lceil \log_2 \rho \rceil}(N + 2)u_{64}^2\rho\mathcal{P}E + 3u_{64}|A'B'|.
    \end{align*}
    Then, for $v = (1,1,\dots,1)^T \in \mathbb{Z}^k$ and $t := 1/\sqrt{2^5(\mathcal{P}-1)}$, 
    \begin{align}
        |AB - C_{b}| 
        &\le t |A| v (2^{\beta'})^T
        + t 2^{\alpha'} v^T |B|
        + (kE + R_{b}) \circ t^2 2^{\alpha'} (2^{\beta'})^T,\label{thm:err-1}
    \end{align}
    where $\circ$ denotes elementwise multiplication.
\end{thm}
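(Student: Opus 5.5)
We split the total error into two parts: the truncation error from Step~\ref{step2}, and the floating-point error of the CRT reconstruction together with the final scaling. Write $A = \mathrm{diag}(2^{-\mu})(A' + \Delta_A)$ and $B = (B' + \Delta_B)\mathrm{diag}(2^{-\nu})$ with $0 \le |\Delta_A|, |\Delta_B| < E$ elementwise, because $\mathrm{trunc}$ loses less than one unit. Then
\[
AB - C_b = \mathrm{diag}(2^{-\mu})\bigl(\Delta_A B' + A'\Delta_B + \Delta_A\Delta_B\bigr)\mathrm{diag}(2^{-\nu}) + \bigl(\mathrm{diag}(2^{-\mu})A'B'\mathrm{diag}(2^{-\nu}) - C_b\bigr).
\]
Using $|B'| \le |B|\mathrm{diag}(2^\nu)$ and $|A'| \le \mathrm{diag}(2^\mu)|A|$, the first group is bounded by $2^{-\mu}v^T|B| + |A|v(2^{-\nu})^T + k\,2^{-\mu}(2^{-\nu})^T$. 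It therefore suffices to prove the lower bounds $2^{-\mu_i} \le t\,2^{\alpha'_i}$ and $2^{-\nu_j} \le t\,2^{\beta'_j}$. These turn the three truncation terms into the first two terms of \eqref{thm:err-1} plus the $kE$ part of the third.

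The key step, which I expect to be the main obstacle, is to bound the scaling exponents in Algorithm~\ref{alg:scaling} from below. By line~\ref{alg:emu:mu}, $\mu_i = 5 - \alpha_i + \lfloor \cdot\rfloor$, where the floored quantity approximates $\mathcal{P}' - e_i/2$ and $\mathcal{P}' \approx \tfrac12\log_2(\mathcal{P}-1) - \tfrac12$. Ignoring rounding, this gives $\mu_i \ge 5 - \alpha_i + \tfrac12\log_2(\mathcal{P}-1) - \tfrac12 - \tfrac12 e_i - 1$, so
\[
2^{-\mu_i} \le 2^{\alpha_i + e_i/2}\,2^{-5/2}(\mathcal{P}-1)^{-1/2}\cdot 2^{\cdots}.
\]
This must be reconciled with $t\,2^{\alpha'_i} = 2^{\alpha_i + e'_i/2}\,2^{-5/2}(\mathcal{P}-1)^{-1/2}$. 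The difficulty is accounting precisely for four effects:
\begin{itemize}
\item the floor, which costs up to one unit;
\item the round-down in $\mathcal{P}'$ and in the constant $-0.5/(1-4u_{32})$;
\item the error of $\mathrm{log2f}$ under \eqref{assumption:log2};
\item the difference between $e_i = \mathrm{log2f}(\max \bar d_{ih})$ and $e'_i = \log_2\max \bar c_{ih}$, which arises because $\bar D = \mathrm{single}_{\bigtriangleup}(\bar C)$.
\end{itemize}
I would first try to show that the constant $-0.5/(1-4u_{32})$ exactly compensates the $\mathrm{log2f}$ relative error, so that $\mathrm{fma}(\dots) \ge \mathcal{P}' - \tfrac12 e'_i - \delta$ with a tiny $\delta$. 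The floor loss together with the $-\tfrac12$ offset in $\mathcal{P}'$ then appears to be absorbed by the $2^5$ (rather than $2^6$) in $t$. Lemma~\ref{lem0}, which gives the companion upper bound used to guarantee \eqref{eq:conditionofCRT}, is proved by the same manipulation in the opposite direction, so its proof can be mirrored.

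For the reconstruction part, I would take from the later lemmas (the exactness of $C'^{(1)}$ in DGEMM, and $Q = \mathrm{round}(C'/\mathcal{P})$ in \eqref{eq:Q}) an absolute bound on $|C'' - A'B'|$. The ingredients are as follows:
\begin{itemize}
\item Each $|W_\ell| \le \lfloor p_\ell/2\rfloor$, so $\sum_\ell |s_{\ell\cdot}W_\ell|$ is at most about $\rho\mathcal{P}$.
\item The summation error of $N$ terms, plus the two fma steps, gives the factor $(N+2)u_{64}$.
\item In DGEMM, $|s_{\ell 2}| \le 2^{1+\lceil\log_2\rho\rceil}u_{64}\mathcal{P}$ supplies the extra $u_{64}$ factor.
\item The final rounding to $\mathbb{F}_b$, and in DGEMM also the multiplication by powers of two, contribute a relative term $u_{32}|A'B'|$ or $3u_{64}|A'B'|$.
\end{itemize}
Together these give $|C'' - A'B'| \le R_b$ elementwise. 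Scaling by $2^{-\mu}$ and $2^{-\nu}$ and applying the same exponent bounds as before yields $R_b \circ t^2 2^{\alpha'}(2^{\beta'})^T$. Adding the two parts then proves \eqref{thm:err-1}.
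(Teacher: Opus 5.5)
Your proposal is correct and follows the paper's proof. The truncation part is Lemma~\ref{lem1}, where $2^{-\mu_i}\le t\,2^{\alpha'_i}$ is obtained by conceding one unit to the floor and one generous unit to all rounding effects. The reconstruction part is Lemma~\ref{lem4}, with the power-of-two rescaling in Algorithm~\ref{alg:emu} treated as exact.

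Some of your heuristic attributions are off, though harmlessly. In this lower-bound direction the constant $-0.5/(1-4u_{32})$ compounds with the $\mathrm{log2f}$ error rather than cancelling it; the paper loses about $147u_{32}$ here, which the slack absorbs. In $R_b$, the ``$+2$'' in $N+2$ comes from rounding the stored constants ($s_{\ell 1}$ or $s_{\ell 2}$, and $\mathcal{P}_1+\mathcal{P}_2$), not from the fma steps. The relative terms $u_{32}|A'B'|$ and $3u_{64}|A'B'|$ come from the final fma, addition and $\mathrm{single}$ roundings, not from the power-of-two scaling.
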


To facilitate the proof of Theorem~\ref{thm:err}, we introduce four lemmas below.
The proofs of Lemmas~\ref{lem1}, \ref{lem2}, \ref{lem3}, and~\ref{lem4}, together with the proof of Theorem~\ref{thm:err}, are presented in the next section.

\begin{lem}[Uniqueness of candidate of $A'B'$]\label{lem0}
    Let $p \in \mathbb{N}^N$ be pairwise coprime integers with $49 \ge N \in \mathbb{N}_{\ge 2}$.
    Let $A' \in \mathbb{F}_b^{m \times k} \cap \mathbb{Z}^{m \times k}$, $B' \in \mathbb{F}_b^{k \times n} \cap \mathbb{Z}^{k \times n}$ for $b \in \{32, 64\}$ be the quantities calculated in Algorithm~\ref{alg:scaling}.
    Assume that $k \le 2^{17}$ and $p_\ell \le 256$.
    Let $\mathcal{P} := \prod_{\ell=1}^N p_\ell$.
    Then, $|A'||B'| < \mathcal{P}/2 \cdot E$ holds.
\end{lem}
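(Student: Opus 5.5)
By \eqref{eq:3}, it suffices to verify \eqref{eq:|A'||B'|<=P/2}; that is, for every $(i,j)$,
\[
2^{(\mu_i-\mu'_i)+(\nu_j-\nu'_j)}\,\bar d_{ij} < \frac{\mathcal{P}}{2}.
\]
Write $x_i := \mu_i-\mu'_i$ and $y_j := \nu_j-\nu'_j$. By lines~\ref{alg:emu:mu} and~\ref{alg:emu:nu} of Algorithm~\ref{alg:scaling}, each is the floor of an FMA result. The plan is to show $x_i \le \mathcal{P}' - \tfrac12 \log_2 \max_h \bar d_{ih}$, and the analogous bound for $y_j$. Since $\bar d_{ij} \le \min(\max_h \bar d_{ih}, \max_h \bar d_{hj}) \le \sqrt{\max_h \bar d_{ih}\,\max_h \bar d_{hj}}$, summing the two bounds gives
\[
x_i+y_j+\log_2\bar d_{ij} \le 2\mathcal{P}'.
\]
Line~\ref{alg:emu:P'} gives $2\mathcal{P}' \le \log_2(\mathcal{P}-1)-1$, hence $2^{x_i+y_j}\bar d_{ij} \le (\mathcal{P}-1)/2 < \mathcal{P}/2$.

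The key step is the bound on $x_i$. Let $c := \mathrm{single}_{\bigtriangledown}(-0.5/(1-4u_{32}))$. Round-down gives $c \le -0.5/(1-4u_{32})$, so $c$ is negative with $|c| \ge 0.5/(1-4u_{32})$. Since $\bar d_{ih}\ge1$ are integers, $\max_h \bar d_{ih} \ge 1$ and $\log_2 \max_h\bar d_{ih} \ge 0$. Assumption~\eqref{assumption:log2} then gives $e_i \ge (1-4u_{32})\log_2\max_h\bar d_{ih} \ge 0$. Therefore
\[
c\,e_i \le -\frac{0.5}{1-4u_{32}}(1-4u_{32})\log_2\max_h\bar d_{ih} = -\tfrac12\log_2\max_h\bar d_{ih}.
\]
The fma computes $c e_i + \mathcal{P}'$ with a single rounding. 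That rounding, and the subsequent floor, could only push the result up by at most half an ulp. I would handle this in one of two ways. First, note that $\lfloor \mathrm{fl}(z)\rfloor \le \lfloor z\rfloor$ whenever rounding cannot cross an integer; rounding to nearest never jumps past an integer that is itself representable, and all integers of this magnitude ($<2^{8}$) are FP32-representable. Monotonicity of rounding then gives $\lfloor \mathrm{fl}(z)\rfloor = \lfloor z \rfloor$ when $z$ is below the next integer, and in general $\lfloor\mathrm{fl}(z)\rfloor \le \lfloor z\rfloor$ because $\mathrm{fl}(z) < \lfloor z\rfloor+1$ as that integer is a float above $z$. Alternatively, the paper's $\mathrm{fl}_{\bigtriangledown}$ in front of the floor is meant as round-down, in which case it is immediate. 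Either way,
\[
x_i \le \lfloor \mathcal{P}' - \tfrac12\log_2\max_h\bar d_{ih}\rfloor.
\]
The same argument applies to $y_j$.

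The main obstacle is the rounding bookkeeping in this key step: checking the sign of $c$ and $e_i$, the direction of each rounding, and the claim that rounding an FMA result cannot push it past an integer. I would also verify the edge case $\max_h\bar d_{ih}=1$, where $e_i = 0$ exactly, so the bound is trivially $\lfloor\mathcal{P}'\rfloor$. Finally, I would confirm that $\mathcal{P}'$ itself satisfies $2\mathcal{P}' \le \log_2(\mathcal{P}-1)-1$. This follows directly from round-down in line~\ref{alg:emu:P'}, with $\log_2$ of the exact integer $\prod p_\ell - 1$ taken mathematically.
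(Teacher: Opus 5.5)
Your argument is correct provided you use your second justification for the rounding step. In the paper's notation, $\mathrm{fl}_{\bigtriangledown}$ means the FMA itself is evaluated in round-down mode, so $x_i=\lfloor \FLd{\FMA{c,e_i,\mathcal{P}'}}\rfloor\le\lfloor c\,e_i+\mathcal{P}'\rfloor$ follows immediately. Drop the first justification. Under round-to-nearest, monotonicity only gives $\FL{z}\le\lfloor z\rfloor+1$, with equality whenever $z$ lies within half an ulp below that integer. So $\lfloor\FL{z}\rfloor\le\lfloor z\rfloor$ fails in general. The slack in your bound $c\,e_i\le-\tfrac12\log_2\max_h\bar d_{ih}$ is of order $u_{32}\log_2\max_h\bar d_{ih}$, which can be smaller than half an ulp at the magnitude of $\mathcal{P}'$, so it cannot absorb this. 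A minor point: entries of $\bar D$ can be $0$. What you actually need is $\max_h\bar d_{ih}\ge 1$ and $\max_h\bar d_{hj}\ge 1$, i.e.\ nonzero rows and columns of $|A||B|$. The paper assumes this tacitly too, by restricting its check to $0\le e'_i\le 29$.

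Compared with the paper, you work directly with $\bar D$ through \eqref{eq:3} and use only $c\le-0.5/(1-4u_{32})$. This gives $(|A'||B'|)_{ij}\le(\mathcal{P}-1)/2$ by a short, purely analytic argument. The paper instead goes through $\bar C$ and $|A||B|$ and uses the sharper bound $c\le-0.5(1+u_{32})/(1-4u_{32})$. It then relies on an exhaustive floating-point check showing that the floored value lies at least $2^{-21}$ below $\mathcal{P}'-\tfrac12 e'_i$. That yields the strictly stronger \eqref{eq:|A||B|upper-rigorous}, namely $(|A'||B'|)_{ij}\le(\mathcal{P}-1)2^{-1-2^{-20}}$.

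For Lemma~\ref{lem0} as stated, your route is simpler and needs no computer verification. However, the extra margin is exactly what the paper uses in the proof of Lemma~\ref{lem3}. There $|\Delta_{Q_1}|$ must stay below $\tfrac12$ by more than the bound $|\Delta_{Q_2}|<2^{27}u_{64}=2^{-26}$. Your estimate only yields $|\Delta_{Q_1}|\le\tfrac12-\tfrac{1}{2\mathcal{P}}$, a margin smaller than $2^{-26}$ as soon as $\mathcal{P}>2^{25}$. So your proof establishes the lemma, but not the strengthened form on which the later analysis depends.
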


\begin{lem}[Truncation error]\label{lem1}
    Let $p \in \mathbb{N}^N$ be pairwise coprime integers with $49 \ge N \in \mathbb{N}_{\ge 2}$.
    Let $A' \in \mathbb{F}_b^{m \times k} \cap \mathbb{Z}^{m \times k}$, $B' \in \mathbb{F}_b^{k \times n} \cap \mathbb{Z}^{k \times n}$, $\bar{C} \in \mathbb{Z}_{32}^{m \times n}$, $\mu \in \mathbb{Z}_{16}^m$, and $\nu \in \mathbb{Z}_{16}^n$ be the quantities calculated in Algorithm~\ref{alg:scaling} for $b \in \{32, 64\}$.
    Assume that $k \le 2^{17}$ and $p_\ell \le 256$.
    Let $\mathcal{P} := \prod_{\ell=1}^N p_\ell$.
    Define $\alpha' \in \mathbb{Z}^{m}$ and $\beta' \in \mathbb{Z}^n$ as
    \[
        \alpha' := \alpha + \frac{1}{2}e',\quad
        \beta' := \beta + \frac{1}{2}f'
    \]
    for $\alpha,e' \in \mathbb{Z}^{m}$ and $\beta,f' \in \mathbb{Z}^n$ with 
    \begin{align*}
        \alpha_i := \lfloor \log_2 \max_{1 \le h \le k}|a_{ih}| \rfloor,\quad
        e'_i := \log_2\max_{1 \le h \le n}\bar{c}_{ih},\quad
        \beta_j := \lfloor \log_2 \max_{1 \le h \le k}|b_{hj}| \rfloor,\quad
        f'_j := \log_2\max_{1 \le h \le m}\bar{c}_{hj}.
    \end{align*}
    Let $v = (1,1,\dots,1)^T \in \mathbb{Z}^k$ and $t := 1/\sqrt{2^5(\mathcal{P}-1)}$.
    Then, 
    \begin{align}
        \mu_i &\ge -\alpha'_i + \frac{1}{2}\left( \log_2(\mathcal{P}-1) + 5 \right),\label{eq:mu_i_upper}\\
        \nu_i &\ge -\beta'_i + \frac{1}{2}\left( \log_2(\mathcal{P}-1) + 5 \right),\label{eq:nu_i_upper}
    \end{align}
    and
    \begin{align}
        \left| AB - \mathrm{diag}(2^{-\mu}) \cdot A'B' \cdot \mathrm{diag}(2^{-\nu}) \right| 
        \le t |A| v (2^{\beta'})^T
            + t 2^{\alpha'} v^T |B|
            + kt^2 2^{\alpha'} (2^{\beta'})^T.\label{eq:AB_upper}
    \end{align}
    hold.
\end{lem}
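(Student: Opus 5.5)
We prove Lemma~\ref{lem1} in two stages. The first stage establishes the lower bounds \eqref{eq:mu_i_upper} and \eqref{eq:nu_i_upper} on the scaling exponents by tracing line~\ref{alg:emu:mu} of Algorithm~\ref{alg:scaling}. The second stage turns these bounds into the truncation estimate \eqref{eq:AB_upper}.

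For the first stage, note that $\mu'_i = 5-\alpha_i$. Since $\bar D=\mathrm{single}_{\bigtriangleup}(\bar C)\ge \bar C$ and rounding up is monotone, $\max_h \bar d_{ih}\ge \max_h\bar c_{ih}$, so $\log_2\max_h\bar d_{ih}\ge e'_i$. The assumption \eqref{assumption:log2} gives $e_i\le(1+4u_{32})\log_2\max_h\bar d_{ih}$. The factor $\mathrm{single}_{\bigtriangledown}(-0.5/(1-4u_{32}))$ is at most $-0.5/(1-4u_{32})$, so it has magnitude at least $0.5/(1-4u_{32})$. The intended consequence is that the fma argument is at most $\mathcal{P}'-\tfrac12\log_2\max_h\bar d_{ih}$.

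Here the signs need care. We want a \emph{lower} bound on $\mu_i$, yet every rounding in the chain goes down, and the floor also loses up to $1$. This is exactly what the constants account for: $\mathcal{P}'$ contains the $-0.5$, which absorbs the floor, and $\mathcal{P}'\approx\frac12\log_2(\mathcal{P}-1)-\frac12$. Unwinding, we aim to show
\[
\mu_i\ge 5-\alpha_i-\tfrac12 e'_i+\tfrac12\log_2(\mathcal{P}-1)-\tfrac12-\tfrac12-(\text{slack}).
\]
We would then check that this slack is covered, possibly by the gap between $\log_2\bar d$ and $e'$, or by the round-down of $\mathcal{P}'$ absorbing at most one ulp. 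This yields $\mu_i\ge-\alpha'_i+\frac12(\log_2(\mathcal{P}-1)+5)$. I expect this bookkeeping to be the main obstacle.

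If the direct lower bound fails, the fallback is to exploit the identity $\lfloor x\rfloor > x-1$ together with $\mathcal{P}'\ge \frac12\log_2(\mathcal{P}-1)-\frac12-2^{-23}\cdot 200$. One would then verify that the resulting inequality still reads as claimed after adding $5$ and halving, and that the $4u_{32}$ perturbation times $e_i<31$ is harmless because it has been pre-compensated by the $1/(1-4u_{32})$ factor. The argument for $\nu_j$ is identical with $\beta,f'$ in place of $\alpha,e'$.

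For the second stage, write $\mathrm{diag}(2^\mu)A=A'+\Delta_A$ with $|\Delta_A|<1$ entrywise, which follows from truncation, and similarly $B\,\mathrm{diag}(2^\nu)=B'+\Delta_B$. Then
\[
AB-\mathrm{diag}(2^{-\mu})A'B'\mathrm{diag}(2^{-\nu})=\mathrm{diag}(2^{-\mu})\bigl(A'\Delta_B+\Delta_A B'+\Delta_A\Delta_B\bigr)\mathrm{diag}(2^{-\nu}).
\]
A cleaner route is to rewrite this as $\mathrm{diag}(2^{-\mu})\bigl(\Delta_A(B\,\mathrm{diag}(2^\nu))+(\mathrm{diag}(2^\mu)A)\Delta_B-\Delta_A\Delta_B\bigr)\mathrm{diag}(2^{-\nu})$, which places the original $A$ and $B$ in the cross terms. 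We bound each $|\Delta|$ by $1$, i.e.\ by $vv^T$-type all-ones blocks. The first term is then bounded by $2^{-\mu_i}\,\mathbf{1}^T|B(:,j)|$, the second by $\sum_h|a_{ih}|\,2^{-\nu_j}$, and the third by $k\,2^{-\mu_i-\nu_j}$.

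Finally, substituting the stage-one bounds gives $2^{-\mu_i}\le 2^{\alpha'_i}\,2^{-\frac12(\log_2(\mathcal{P}-1)+5)}=t\,2^{\alpha'_i}$, and likewise $2^{-\nu_j}\le t\,2^{\beta'_j}$. With these, the three terms become $t|A|v(2^{\beta'})^T$, $t2^{\alpha'}v^T|B|$ and $kt^22^{\alpha'}(2^{\beta'})^T$, which is exactly \eqref{eq:AB_upper}.
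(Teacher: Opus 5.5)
Your second stage is correct and is exactly the paper's argument. It uses the same three-term splitting with $|\Delta_A|,|\Delta_B|<1$ entrywise, followed by $2^{-\mu_i}\le t2^{\alpha'_i}$ and $2^{-\nu_j}\le t2^{\beta'_j}$.

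The gap is in the first stage, which is the substantive part of the lemma. The inequalities you actually write down all point in the direction needed for Lemma~\ref{lem0}, i.e.\ an \emph{upper} bound on the fma argument. Your stated ``intended consequence'', that the fma argument is \emph{at most} $\mathcal{P}'-\frac12\log_2\max_h\bar d_{ih}$, is of that kind. Neither $\log_2\max_h\bar d_{ih}\ge e'_i$ nor $|\mathrm{single}_{\bigtriangledown}(-0.5/(1-4u_{32}))|\ge 0.5/(1-4u_{32})$ can produce a lower bound on $\mu_i$, because $e_i$ is multiplied by a negative constant. You need the reverse estimates:
\begin{itemize}
\item the round-up enlarges by less than one ulp, so $\max_h\bar d_{ih}\le(1+2u_{32})\max_h\bar c_{ih}$, hence $\log_2\max_h\bar d_{ih}\le e'_i+3u_{32}$;
\item $e_i\le(1+4u_{32})\log_2\max_h\bar d_{ih}$;
\item $\mathrm{single}_{\bigtriangledown}(-0.5/(1-4u_{32}))\ge-0.5(1+6u_{32})$.
\end{itemize}
Together with $e'_i\le 29$ (from $\bar c_{ij}\le 2^{29}$), these give
\[
\mathrm{single}_{\bigtriangledown}\bigl(-0.5/(1-4u_{32})\bigr)\, e_i\ \ge\ -\tfrac12 e'_i-147u_{32}.
\]
This is the key estimate in the paper's proof, and it is missing from yours.

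The mechanisms you propose for absorbing the losses are also backwards. The $-0.5$ inside $\mathcal{P}'$ does not absorb the floor. It lowers $\mu_i$ by a further half unit; together with its counterpart in the $\nu$ formula, its role is to supply the factor $\frac12$ required in Lemma~\ref{lem0}. Likewise, the gap between $\log_2\bar d$ and $e'$, the $\mathrm{log2f}$ error, and the $1/(1-4u_{32})$ factor do not compensate one another in this direction; all of them push $\mu_i$ down.

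As a result, your displayed target is false when the slack is of rounding-error size. The floor can lose almost a full unit, so $\mu_i$ can be as small as about $-\alpha'_i+\frac12\log_2(\mathcal{P}-1)+\frac72$, not $+4$. What makes \eqref{eq:mu_i_upper} true is that its constant $+\frac52$ sits roughly one unit below this worst case. Meanwhile, all rounding effects together are below $10^{-4}$: the $147u_{32}$ above, the round-down of an fma result of magnitude below $200$, and the round-down defining $\mathcal{P}'$. The paper simply bounds the floored quantity from below by $\frac12\log_2(\mathcal{P}-1)-\frac12-\frac12e'_i-2$.

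Your fallback ($\lfloor x\rfloor>x-1$ together with the bound on $\mathcal{P}'$) uses the right tools. It only closes once the wrong-direction inequalities are replaced by the correct ones listed above.
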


\begin{lem}[Accumulation error]\label{lem2}
    Let $A' \in \mathbb{F}_b^{m \times k} \cap \mathbb{Z}^{m \times k}$ and $B' \in \mathbb{F}_b^{k \times n} \cap \mathbb{Z}^{k \times n}$ for $b \in \{32, 64\}$.
    Let $p \in \mathbb{N}^N$ be pairwise coprime integers with $49 \ge N \in \mathbb{N}_{\ge 2}$.
    Let $\mathcal{P} := \prod_{\ell=1}^N p_\ell$.
    Let $q_\ell \in \mathbb{N}$ be the modular multiplicative inverses of $\mathcal{P}/p_{\ell}$.
    Assume that $k \le 2^{17}$, $p_\ell \le 256$, and $q_\ell < p_\ell$.
    Let $C'^{(1)},C'^{(2)} \in \mathbb{F}_{64}^{m \times n}$ and $W_\ell \in \mathbb{Z}_8^{m \times n}$ be the quantities calculated in Algorithm~\ref{alg:crt}.
    Let $\rho := \sum_{\ell =1}^N \lfloor p_\ell/2 \rfloor$.
    Then, for $b = 64$, 
    \[
        \left| C'^{(1)} + C'^{(2)} - \sum_{\ell = 1}^N \frac{\mathcal{P}}{p_\ell} q_\ell W_\ell \right| 
        \le 2^{1+\lceil \log_2 \rho \rceil}(N + 1 + Nu_{64})u_{64}^2 \rho \mathcal{P}E
    \]
    holds. For $b = 32$,
    \[
        \left| C'^{(1)} - \sum_{\ell = 1}^N \frac{\mathcal{P}}{p_\ell} q_\ell W_\ell \right| 
        \le (N+1)u_{64} \rho \mathcal{P}E
    \]
    holds.
\end{lem}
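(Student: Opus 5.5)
The statement immediately above is Lemma~\ref{lem2} (accumulation error). I will treat $C'^{(1)}$ and $C'^{(2)}$ separately and split the error into a representation part and a rounding part. Set $S_\ell := \mathcal{P}/p_\ell \cdot q_\ell$. Two elementary facts drive everything:
\begin{itemize}
\item $|W_\ell| \le \lfloor p_\ell/2 \rfloor E$, by the property of $\bmod$ stated in the notation section;
\item $S_\ell < \mathcal{P}$, since $q_\ell < p_\ell$.
\end{itemize}
Consequently $\sum_\ell S_\ell |W_\ell| \le \rho \mathcal{P} E$. This explains the factor $\rho\mathcal{P}$ in both bounds.

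\emph{Case $b=32$.} Here $s_{\ell 1} = \mathrm{double}(S_\ell)$, so $s_{\ell 1} = S_\ell(1+\delta_\ell)$ with $|\delta_\ell| \le u_{64}$. The computed sum of $N$ terms $\mathrm{fl}(\sum_\ell s_{\ell 1} W_\ell)$ is handled by the standard recursive-summation bound. Each product $s_{\ell 1} W_\ell$ incurs at most one rounding, and the additions incur at most $N-1$ more. Writing $\gamma_j = ju/(1-ju)$, this gives
\[
\Bigl|\mathrm{fl}\Bigl(\sum_\ell s_{\ell1}W_\ell\Bigr) - \sum_\ell s_{\ell1}W_\ell\Bigr| \le \gamma_N \sum_\ell |s_{\ell1}||W_\ell|.
\]
Adding the representation error $\sum_\ell |\delta_\ell| S_\ell |W_\ell|$ gives a total of roughly $(N+1)u_{64}\rho\mathcal{P}$. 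To obtain exactly $(N+1)u_{64}$ rather than $\gamma_{N+1}$, I would use the sharper first-order-free bound $|\mathrm{fl}(\sum x_i) - \sum x_i| \le (n-1)u\sum|x_i|$ (Jeannerod--Rump) together with the exact integrality of $W_\ell$. Alternatively, I would absorb the $O(u^2)$ terms using the slack in $S_\ell \le \mathcal{P}(p_\ell-1)/p_\ell$, since $S_\ell < \mathcal{P}$ is not tight. I would try the Jeannerod--Rump route first.

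\emph{Case $b=64$.} First I show $C'^{(1)}$ is computed exactly. This is the claim referenced to Section~\ref{sebsec:Proof of Lemma3}, which I would prove inline. Each $s_{\ell1}$ keeps only the top $\beta_\ell$ bits of $S_\ell$, so it is an integer multiple of
\[
2^{\lfloor\log_2 S_\ell\rfloor - \beta_\ell + 1} = 2^{\lfloor \log_2 \max_h S_h\rfloor - 52 + \lceil\log_2\rho\rceil} =: \sigma.
\]
This common unit $\sigma$ is independent of $\ell$. Every partial sum is then an integer multiple of $\sigma$ bounded in magnitude by $\sum_\ell |s_{\ell1}||W_\ell| \le \rho\max_h S_h < 2^{\lceil\log_2\rho\rceil + \lfloor\log_2\max_h S_h\rfloor+1}$. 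That is fewer than $53$ significant bits relative to $\sigma$, so every product and partial sum is exactly representable. Hence $C'^{(1)} = \sum_\ell s_{\ell1}W_\ell$ exactly.

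The remaining error is therefore $|C'^{(2)} - \sum_\ell (S_\ell - s_{\ell1})W_\ell|$. Two ingredients bound it:
\begin{itemize}
\item the truncation residual satisfies $|S_\ell - s_{\ell1}| < \sigma \le 2^{1+\lceil\log_2\rho\rceil}u_{64}\,\mathcal{P}$ (using $\max_h S_h<\mathcal{P}$ and $2^{-52}=2u_{64}$), and rounding it to $s_{\ell2}$ adds a relative error $u_{64}$;
\item the floating-point summation of $N$ terms adds a factor of about $Nu_{64}$ on $\sum_\ell |s_{\ell2}||W_\ell|$.
\end{itemize}
Collecting terms gives a bound of the form $2^{1+\lceil\log_2\rho\rceil}u_{64}^2\rho\mathcal{P}\,(1 + N(1+u_{64}) + \cdots)$. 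This matches the stated $(N+1+Nu_{64})$.

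\emph{Main obstacle.} The main obstacle is bookkeeping the constants sharply: getting $(N+1)$ and $(N+1+Nu_{64})$ rather than $\gamma$-type expressions. The step I expect to be most delicate is the exactness argument for $C'^{(1)}$, specifically verifying that the choice of $\beta_\ell$ makes all $s_{\ell1}$ share the unit $\sigma$ and that the magnitude bound leaves enough bits. For the constants, I would use the $(n-1)u$ summation bound and the representation errors exactly as described.
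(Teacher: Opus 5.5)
Your proposal is correct and follows essentially the same route as the paper: for $b=64$ you show $C'^{(1)}$ is exact because every product $s_{\ell1}W_\ell$ and every partial sum lies on the common grid $2^{\lceil\log_2\rho\rceil}u_{64}\cdot 2\,\mathrm{ufp}(\max_h S_h)\mathbb{Z}$ with magnitude below $2^{53}$ grid units, and then you bound $C'^{(2)}$ by the representation error of $s_{\ell2}$ plus the inner-product rounding error; for $b=32$ you combine the representation error of $s_{\ell1}$ with the dot-product rounding error. One small correction: for $b=32$ the products $s_{\ell1}W_\ell$ are in general not exact (a 53-bit $s_{\ell1}$ times an up-to-8-bit $W_\ell$), so integrality of $W_\ell$ buys nothing, and what gives the clean constants is the Jeannerod--Rump inner-product bound $|x^Ty-\mathrm{fl}(x^Ty)|\le Nu_{64}|x|^T|y|$ used in the paper, together with $s_{\ell1}\le\mathcal{P}$ (your slack observation $S_\ell\le\mathcal{P}(p_\ell-1)/p_\ell$), which yields $(N+1)u_{64}$ and $(N+1+Nu_{64})$ with no leftover higher-order terms.
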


\begin{lem}[Validity of $Q$]\label{lem3}
    Let $Q \in \mathbb{F}_{64}^{m \times n} \cap \mathbb{Z}^{m \times n}$ and $W_\ell \in \mathbb{Z}_8^{m \times n}$ be the quantities calculated in Algorithm~\ref{alg:crt}.
    Let $p \in \mathbb{N}^N$ be pairwise coprime integers with $49 \ge N \in \mathbb{N}_{\ge 2}$.
    Assume that $p_\ell \le 256$.
    Suppose that $q_\ell \in \mathbb{N}$ is the modular multiplicative inverse of $\prod_{h=1}^N p_h/p_\ell$.
    Let $Q_{\mathit{exact}} := \mathrm{round}(\sum_{\ell=1}^N q_\ell/p_\ell \cdot W_\ell)$.
    Then, $Q = Q_{\mathit{exact}}$ holds.
\end{lem}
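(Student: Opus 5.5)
The plan is to compare the computed quantity $y := \mathrm{fl}(\mathcal{P}_{inv}\cdot C'^{(1)})$ with the exact quantity $X := \sum_{\ell=1}^N q_\ell/p_\ell\cdot W_\ell = C'/\mathcal{P}$, where $C' := \sum_\ell \mathcal{P}/p_\ell\cdot q_\ell W_\ell \in \mathbb{Z}^{m\times n}$. The aim is to show that $y$ and $X$ are separated by less than the distance from $X$ to the nearest half-integer, so that $\mathrm{round}(y)=\mathrm{round}(X)=Q_{\mathit{exact}}$.

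\textbf{Step 1 (structure of $X$).} Since $W_\ell \equiv A'_\ell B'_\ell \equiv A'B' \bmod p_\ell$, Theorem~\ref{thm:crt} gives $C' \equiv A'B' \bmod \mathcal{P}$. Lemma~\ref{lem0} gives $2|A'B'|<\mathcal{P}$, and therefore $C' = \mathcal{P}Q_{\mathit{exact}} + A'B'$, that is,
\[
X = Q_{\mathit{exact}} + A'B'/\mathcal{P}, \qquad |A'B'|/\mathcal{P} < 1/2 .
\]
To absorb rounding errors we need a margin $\delta$ with $|A'B'|/\mathcal{P} \le 1/2-\delta$. I would obtain it by re-examining the proof of Lemma~\ref{lem0}. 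The offset $-0.5$ in $\mathcal{P}'$ makes the scaling satisfy $2^{\mu-\mu'}\bar{D}2^{\nu-\nu'} \le 2^{2\mathcal{P}'} \le (\mathcal{P}-1)/2$, and the floor operations and the factor $1/(1-4u_{32})$ only shrink this further. Together with~\eqref{eq:3}, this should give $|A'B'| \le (\mathcal{P}-1)/2$ (ideally something smaller, around $\mathcal{P}/4$), and hence $\delta \ge 1/(2\mathcal{P})$, or better, $\delta$ of order $1/4$.

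\textbf{Step 2 (error in $y$).} This step bounds $|y-X|$. We have $|X| \le \sum_\ell (q_\ell/p_\ell)|W_\ell| < \rho$. For $b=64$, the paper states (proved later) that $C'^{(1)}=\sum_\ell s_{\ell1}W_\ell$ exactly. The difference $C'^{(1)}-C'$ is then $-\sum_\ell s_{\ell2}W_\ell$, and by the choice of $\beta_\ell$ it is bounded by about $2^{1+\lceil\log_2\rho\rceil}u_{64}\rho\mathcal{P}$ times a small factor (cf.\ Lemma~\ref{lem2}). For $b=32$, the rounding of $s_{\ell1}$ and the $N$-term summation give $|C'^{(1)}-C'|\le (N+1)u_{64}\rho\mathcal{P}$, directly from Lemma~\ref{lem2}. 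Next, $\mathcal{P}_{inv}=\mathcal{P}^{-1}(1+\epsilon_1)$ and the product rounding contribute factors $(1+\epsilon_i)$ with $|\epsilon_i|\le u_{64}$. Altogether,
\[
|y - X| \le c\,(N+2)u_{64}\rho
\]
for a modest constant $c$. Since $\rho < 2^{13}$ and $N\le 49$, this is below $2^{-33}$.

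\textbf{Step 3 (conclusion and the main obstacle).} If $|y-X|<\delta$, then $y$ lies in the same open interval $(Q_{\mathit{exact}}-1/2,\,Q_{\mathit{exact}}+1/2)$ as $X$, so $\mathrm{round}(y)=Q_{\mathit{exact}}$. The main obstacle is Step 1's margin. The bare statement of Lemma~\ref{lem0} only gives $\delta>0$, which can be as small as $1/\mathcal{P}$; that is astronomically smaller than the error in Step 2. So the argument must extract a quantitative gap from the scaling in Algorithm~\ref{alg:scaling}. Specifically, I would show that the $-0.5$ shift in $\mathcal{P}'$, combined with the floors in lines~\ref{alg:emu:mu}--\ref{alg:emu:nu}, forces $|A'||B'|\le \mathcal{P}/4$ up to lower-order terms, giving $\delta \approx 1/4 \gg 2^{-33}$. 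I would try this bound first. If it turns out too delicate, I would fall back to showing that $\mathrm{round}(y)\in\{Q_{\mathit{exact}}\}$ whenever $|A'B'|/\mathcal{P}\le 1/2-2^{-30}$, and verify that the scaling guarantees this.
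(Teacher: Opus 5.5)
\textbf{Verdict: there is a genuine gap.} Your skeleton matches the paper's. You write $Q_{\mathit{exact}}-\FL{\mathcal{P}_{inv}C'^{(1)}}=\Delta_{Q_1}+\Delta_{Q_2}$ with $\Delta_{Q_1}=-A'B'/\mathcal{P}$, and show the two pieces together stay below $1/2$. You are also right that the bare statement of Lemma~\ref{lem0} cannot close this. But the quantitative margin you call the main obstacle is never supplied, and both remedies you propose fail.

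\textbf{Your primary plan fails.} The bound $|A'||B'|\le\mathcal{P}/4$ is false. The $-0.5$ in $\mathcal{P}'$, used once for $\mu_i$ and once for $\nu_j$, gives exactly the factor $2$ needed for $2|A'||B'|<\mathcal{P}$ and nothing more. The floors on lines~\ref{alg:emu:mu}--\ref{alg:emu:nu} can remove almost nothing. For example, take $N=2$ ($\mathcal{P}=65280$), $A=(1,\,2^{-5})$ and $B=(63/32,\,23/32)^T$. Then $\bar{c}_{11}=2039$ and $\mu_1=\nu_1=7$, so $A'=(128,\,4)$ and $B'=(252,\,92)^T$. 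Hence $A'B'=32624$, while $\mathcal{P}/2=32640$. For larger $N$ the only certified gap is far smaller still.

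\textbf{Your fallback also fails.} The margin $2^{-30}$ is too small because Step~2 underestimates the $b=64$ error. Lemma~\ref{lem2} bounds $C'^{(1)}+C'^{(2)}$, but $Q$ is formed from $C'^{(1)}$ alone. Each $s_{\ell1}$ keeps only about $53-\lceil\log_2\rho\rceil$ bits. So the available bound on $|\mathcal{P}^{-1}C'-\mathcal{P}_{inv}C'^{(1)}|$ is of order $2^{1+\lceil\log_2\rho\rceil}\rho u_{64}$. That is about $2^{-28}$ for $N=49$, and the paper uses $2^{27}u_{64}=2^{-26}$. It is not $c(N+2)\rho u_{64}<2^{-33}$ with a modest $c$. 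Your own intermediate sentence for $b=64$ already gives the larger quantity; your summary formula contradicts it.

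\textbf{What is missing} is a certified gap exceeding $2^{-26}$. The paper takes it from the \emph{proof} of Lemma~\ref{lem0}, not its statement. An exhaustive evaluation covers every FP32 value $\mathcal{P}'$ produced by the moduli~\eqref{p_list} and every admissible $e'_i\in[0,29]$ (a logarithm of an integer $\bar c\le 2^{29}$). It shows
\[
\lfloor\mathcal{P}'-0.5(1+u_{32})e'_i\rfloor\le\mathcal{P}'-0.5e'_i-2^{-21}.
\]
This yields~\eqref{eq:|A||B|upper-rigorous}, $|A'B'|\le|A'||B'|\le(\mathcal{P}-1)2^{-1-2^{-20}}$, and hence
\[
|\Delta_{Q_1}|<\frac{\mathcal{P}-1}{2(1+2^{-21})\mathcal{P}},
\]
which leaves a slack of about $2^{-22}$ below $1/2$. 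That slack absorbs $|\Delta_{Q_2}|\le\rho(2^{1+\lceil\log_2\rho\rceil}+2)u_{64}<2^{27}u_{64}$, obtained from $|C'^{(1)}|\le\rho\mathcal{P}$ in both precisions.

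Your Step~3 needs an explicit constant of exactly this kind. An equivalent finite check would also do, provided it exploits the discreteness of $e'_i$ and $\mathcal{P}'$ and the $(1+u_{32})$ inflation of the coefficient. Without one of these, the argument does not close.
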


\begin{lem}[Error in final reduction]\label{lem4}
    Let $A' \in \mathbb{F}_b^{m \times k} \cap \mathbb{Z}^{m \times k}$, $B' \in \mathbb{F}_b^{k \times n} \cap \mathbb{Z}^{k \times n}$ for $b \in \{32, 64\}$.
    Let $p \in \mathbb{N}^N$ be pairwise coprime integers with $49 \ge N \in \mathbb{N}_{\ge 2}$.
    Assume that $k \le 2^{17}$ and $p_\ell \le 256$.
    Let $\rho := \sum_{\ell =1}^N \lfloor p_\ell/2 \rfloor$ and $\mathcal{P} := \prod_{\ell=1}^N p_\ell$.
    Let $C''_b \in \mathbb{F}_{b}^{m \times n} \cap \mathbb{Z}^{m \times n}$ be the quantity calculated in Algorithm~\ref{alg:crt}.
    Let
    \begin{align*}
        R_{32} &:= (1 + u_{32})(N+2)u_{64}\rho\mathcal{P}E + u_{32}|A'B'|,\\
        R_{64} &:= (1 + 3u_{64})2^{1+\lceil \log_2 \rho \rceil}(N + 2)u_{64}^2\rho\mathcal{P}E + 3u_{64}|A'B'|.
    \end{align*}
    Then, 
    \[
        |A'B' - C''_{b}| \le R_{b}
    \]
    holds.
\end{lem}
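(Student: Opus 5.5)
We need to prove Lemma 4: $|A'B' - C''_b| \le R_b$. The plan is to compare the computed $C''$ with the exact reduction $\mathrm{mod}(C',\mathcal{P}) = C' - \mathcal{P}Q_{\mathit{exact}}$, where $C' := \sum_\ell \mathcal{P}/p_\ell\, q_\ell W_\ell$.

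\textbf{Step 1: the exact reduction equals $A'B'$.} By Lemma~\ref{lem0}, $2|A'B'| \le 2|A'||B'| < \mathcal{P}E$. Since $W_\ell \equiv A'B' \bmod p_\ell$, Theorem~\ref{thm:crt} and uniqueness give $A'B' = C' - \mathcal{P}\,\mathrm{round}(C'/\mathcal{P})$. Moreover $C'/\mathcal{P} = \sum_\ell q_\ell/p_\ell\, W_\ell$, so Lemma~\ref{lem3} yields $Q = Q_{\mathit{exact}} = \mathrm{round}(C'/\mathcal{P})$ and hence $A'B' = C' - \mathcal{P}Q$ exactly. The analysis therefore reduces to bounding the floating-point error in evaluating $C' - \mathcal{P}Q$ from $C'^{(1)}, C'^{(2)}$, $\mathcal{P}_1,\mathcal{P}_2$.

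\textbf{Step 2: the FMA chain.} Write the computed value as
\[
C'' = \mathrm{fl}\bigl(-Q\mathcal{P}_2 + \mathrm{fl}(\mathrm{fl}(C'^{(1)} - Q\mathcal{P}_1) + C'^{(2)})\bigr).
\]
For $b=32$ we have $\mathcal{P}_2 = 0$ and $C'^{(2)}=0$. The FMA $C'^{(1)} - Q\mathcal{P}_1$ is computed with one rounding. Its true value is
\[
(C'^{(1)} - C') + (C' - \mathcal{P}Q) + Q(\mathcal{P} - \mathcal{P}_1) = \delta + A'B' + Q\epsilon_P,
\]
where $|\delta| \le (N+1)u_{64}\rho\mathcal{P}$ by Lemma~\ref{lem2}. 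We also have $|Q| \lesssim \rho$, because $|C'/\mathcal{P}| \le \sum q_\ell/p_\ell |W_\ell| \le \rho$, and $|\epsilon_P| \le u_{64}\mathcal{P}$, which gives $|Q\epsilon_P| \le u_{64}\rho\mathcal{P}$ approximately. The accumulated absolute error is then about $(N+2)u_{64}\rho\mathcal{P}$ before rounding. Rounding this FMA result to FP64 contributes a relative error of $u_{64}$, which is negligible against the final FP32 rounding. That final rounding $\mathrm{single}(\cdot)$ adds $u_{32}$ times the value, namely $u_{32}|A'B'| + u_{32}(N+2)u_{64}\rho\mathcal{P}$. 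Together these give $R_{32}$; the FP64 rounding term must be absorbed by the slack, for instance by using $|Q| \le \rho - 1/2$ or by exact representability of the integer result.

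For $b=64$, the inner FMA $C'^{(1)} - Q\mathcal{P}_1$ is exact by Sterbenz-type cancellation or because the result is an integer of moderate size. The remaining two operations each incur a relative error of $u_{64}$. The total exact-arithmetic discrepancy is
\[
(C'^{(1)} + C'^{(2)} - C') + Q(\mathcal{P} - \mathcal{P}_1 - \mathcal{P}_2).
\]
The first term is bounded by Lemma~\ref{lem2} by $2^{1+\lceil\log_2\rho\rceil}(N+1+Nu_{64})u_{64}^2\rho\mathcal{P}$. The second is at most $\rho\, u_{64}^2\mathcal{P}$, since the double-double representation of $\mathcal{P}$ is accurate to $u_{64}^2\mathcal{P}$, and this is dominated by the $2^{1+\lceil\log_2\rho\rceil}u_{64}^2\rho\mathcal{P}$ slot. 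Propagating the at most three roundings through $(1+u_{64})^3 - 1 \le 3u_{64}(1+u_{64})$-type bounds applied to $|A'B'| + \text{discrepancy}$ gives $3u_{64}|A'B'|$ plus a factor $(1+3u_{64})$ on the discrepancy, which is $R_{64}$.

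\textbf{Main obstacle.} The delicate part is the intermediate roundings. One must show that the inner FMA in the FP64 case introduces no error, or only error that is absorbable. The approach is to argue that $C'^{(1)}$ and $Q\mathcal{P}_1$ are close, since their difference is $O(\rho\mathcal{P}u_{64})$ relative to magnitudes of order $\rho\mathcal{P}$. The difference is then a multiple of $\mathrm{ulp}$ that fits in 53 bits, so it is exact. The inequality $|Q| \le \rho$ must also be checked carefully, and it should follow directly from $|W_\ell| \le \lfloor p_\ell/2\rfloor$ and $q_\ell < p_\ell$.
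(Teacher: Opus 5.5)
Your route is the paper's. You obtain $A'B' = C' - \mathcal{P}Q$ from Lemma~\ref{lem0}, the CRT and Lemma~\ref{lem3}, bound the exact-arithmetic discrepancy by Lemma~\ref{lem2} plus $|Q|\,|\mathcal{P}-\mathcal{P}_1-\mathcal{P}_2|$, and then apply a relative rounding factor to $|A'B'| + (\text{discrepancy})$. Your bound $|Q| \le \rho$ (from $|C'/\mathcal{P}| < \rho$ and $\rho \in \mathbb{N}$) is correct and slightly sharper than the paper's $\rho + \frac12$. In fact the paper's $b=32$ estimate $|\mathcal{P}-\mathcal{P}_1|\,|Q| \le u'_{64}\rho\mathcal{P}$ tacitly needs your version.

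\textbf{The gap is in the rounding bookkeeping,} which is exactly what fixes the coefficients $u_{32}$ and $3u_{64}$ of $|A'B'|$ in $R_b$. The missing tool is the sharper relative bound $u'_b = u_b/(1+u_b)$ from Lemma~\ref{lem:basic-err}.
\begin{itemize}
\item For $b=32$, the FP64 rounding of the FMA and the final $\mathrm{single}(\cdot)$ are applied in succession to $D_{32} := C'^{(1)} - \mathcal{P}_1Q$. Hence $C''_{32} = \Delta_4\Delta_5 D_{32}$ with $|\Delta_4-1|\le u'_{32}$ and $|\Delta_5-1|\le u'_{64}$. Moreover $(1+u'_{32})(1+u'_{64}) \le 1+u_{32}$, because $u_{32}-u'_{32} = u_{32}^2/(1+u_{32}) \approx 2^{-48}$ dwarfs $u_{64}=2^{-53}$. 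So the FP64 rounding is absorbed at no cost.
\item You leave this step open. ``Exact representability of the integer result'' is not automatic, since that integer can be about $\mathcal{P}/2 \gg 2^{53}$ in size.
\item Your route via $|Q|\le\rho-\frac12$ can be made to work. Indeed $|C'/\mathcal{P}| \le \rho - \sum_\ell \lfloor p_\ell/2\rfloor/p_\ell \le \rho-\frac23$ gives $|Q|\le\rho-1$. That frees $u'_{64}\mathcal{P}$ of slack, which covers $u_{64}|A'B'| < u_{64}\mathcal{P}/2$. But you neither prove that bound nor carry out the accounting.
\item For $b=64$, charging $u_{64}$ per operation puts $(1+u_{64})^3-1 = 3u_{64}+3u_{64}^2+u_{64}^3$ in front of $|A'B'|$, not $3u_{64}$. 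The paper instead uses $(1+u'_{64})^3 \le 1+3u_{64}$.
\end{itemize}

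\textbf{Your reason for exactness of the inner FMA when $b=64$ is wrong as stated.} We have $C'^{(1)} - Q\mathcal{P}_1 = A'B' + (C'^{(1)}-C') + Q(\mathcal{P}-\mathcal{P}_1)$, which is dominated by $A'B'$. So it can be about $\mathcal{P}/2$ in size, not $O(u_{64}\rho\mathcal{P})$. Nor is it an integer of moderate size once $\mathcal{P}\gg 2^{53}$.

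Exactness does hold, but via a grid argument:
\begin{itemize}
\item By \eqref{eq:s_i1}, together with $r \ge \mathcal{P}/\min_\ell p_\ell$ and $2^{\lceil\log_2\rho\rceil} \ge \rho \ge \min_\ell p_\ell$ (for $N\ge 2$), $C'^{(1)}$ lies in $2u_{64}\mathrm{ufp}(\mathcal{P})\mathbb{Z}$.
\item $Q\mathcal{P}_1$ lies in the same grid.
\item The difference is below $\mathcal{P} < 2\,\mathrm{ufp}(\mathcal{P})$ in magnitude, so Lemma~\ref{lem:fl} applies.
\end{itemize}

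The paper does not use exactness; it attaches one rounding factor per operation, as in \eqref{eq:C''upper}. Those factors, however, multiply the partial results $C'^{(1)}-\mathcal{P}_1Q$ and $(\cdot)+C'^{(2)}$. They do not multiply $D_{64} := C'^{(1)}+C'^{(2)}-(\mathcal{P}_1+\mathcal{P}_2)Q$ itself. So ``$3u_{64}$ times $(|A'B'|+\text{discrepancy})$'' does not follow verbatim when these terms cancel: to first order, extra terms $u_{64}|C'^{(2)}| + 2u_{64}|\mathcal{P}_2Q|$ appear. The paper's step \eqref{eq:errC''} passes over this just as your sketch does. Exactness of the inner FMA is the cleanest repair. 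It leaves only $u_{64}|\mathcal{P}_2Q| = O(u_{64}^2\rho\mathcal{P})$, which fits in the slack of the factor $N+2$.
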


We have $|A'||B'| < \mathcal{P}/2 \cdot E$ from Lemma~\ref{lem0}; thus, based on \eqref{thm:err-1} in Theorem~\ref{thm:err}, we obtain
\begin{align}
        |AB - C_{b}| 
        &\le t |A| v (2^{\beta'})^T
        + t 2^{\alpha'} v^T |B|
        + (kE + R_{b}) \circ t^2 2^{\alpha'} (2^{\beta'})^T \\
        &\le t |A| v (2^{\beta'})^T + t 2^{\alpha'} v^T |B| + (k + r_b)t^2 2^{\alpha'} (2^{\beta'})^T, \label{thm:err-2}
\end{align}
where 
\begin{align*}
    r_{32} &:= (1 + u_{32})(N+2)u_{64}\rho\mathcal{P} + \frac{1}{2}u_{32}\mathcal{P},\\
    r_{64} &:=(1 + 3u_{64})2^{1+\lceil \log_2 \rho \rceil}(N + 2)u_{64}^2\rho\mathcal{P} + \frac{3}{2}u_{64}\mathcal{P}.
\end{align*}

%====================
\section{Proofs}
\label{sec:Proofs}
%====================

%====================
\subsection{Properties}
%====================
We give the following lemmas for use in proofs.
\begin{lem}[Definition of floating-point numbers]\label{lem:fl}
Let $a \in \mathbb{R}$ be an integral multiple of the minimum positive floating-point number.
If $|a|$ is less than or equal to the maximum floating-point number, it holds that
\begin{equation*}
    a \in \mathbb{F}_b \iff \exists k \in \mathbb{Z}\ \mathrm{s.t.}\ |a| \le 2^k,\ a \in u_b 2^k\mathbb{Z}. 
\end{equation*}
\end{lem}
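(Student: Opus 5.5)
The plan is to prove both directions directly from the standard model of $\mathbb{F}_b$. Let $\mathtt{p}$ be the precision, so $u_b = 2^{-\mathtt{p}}$, let $e_{\min}$ and $e_{\max}$ be the exponent range, and let $\eta := 2^{e_{\min}-\mathtt{p}+1}$ be the minimum positive floating-point number. Then every nonzero $x \in \mathbb{F}_b$ can be written as $x = M\cdot 2^{e-\mathtt{p}+1}$ with $M \in \mathbb{Z}$, $|M| < 2^{\mathtt{p}}$, $e_{\min}\le e \le e_{\max}$. There are two cases:
\begin{itemize}
\item normal: $2^{e}\le |x| < 2^{e+1}$, with $e = \lfloor \log_2|x| \rfloor$;
\item subnormal: $e = e_{\min}$ and $|x| < 2^{e_{\min}}$, so that $x \in \eta\mathbb{Z}$.
\end{itemize}
The case $a=0$ is trivial in both directions (any $k$ works, since $0 \in u_b2^k\mathbb{Z}$), so I assume $a \neq 0$ throughout.

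\emph{($\Rightarrow$)} For a normal $a = M 2^{e-\mathtt{p}+1}$, I choose $k := e+1$. Then $|a| < 2^{e+1} = 2^k$, and $u_b 2^k = 2^{e+1-\mathtt{p}}$, so $a = M\cdot u_b 2^k \in u_b 2^k\mathbb{Z}$. For a subnormal $a$, I choose $k := e_{\min}+1$. Then $|a| < 2^{e_{\min}} \le 2^k$, and $u_b 2^k = 2^{e_{\min}+1-\mathtt{p}} = \eta$, so $a \in \eta\mathbb{Z}$ holds by the subnormal representation.

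\emph{($\Leftarrow$)} Suppose $|a| \le 2^k$ and $a = j\,2^{k-\mathtt{p}}$ with $j\in\mathbb{Z}$. Put $e_0 := \lfloor \log_2|a| \rfloor \le k$.
\begin{itemize}
\item If $e_0 = k$, then $|a| = 2^k$ is a power of two.
\item Otherwise $e_0 \le k-1$, so $e_0-\mathtt{p}+1 \le k-\mathtt{p}$. Hence $a \in 2^{e_0-\mathtt{p}+1}\mathbb{Z}$, i.e.\ $a = M 2^{e_0-\mathtt{p}+1}$ with $M \in \mathbb{Z}$. Since $|a| < 2^{e_0+1}$, we get $|M| < 2^{\mathtt{p}}$.
\end{itemize}
The power-of-two case fits the same pattern with $M = \pm 2^{\mathtt{p}-1}$ and $e_0 = k$. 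So in all cases $a = M2^{e_0-\mathtt{p}+1}$ with $|M|<2^{\mathtt{p}}$.

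It remains to place $e_0$ in the exponent range, and this is where the two standing hypotheses enter.
\begin{itemize}
\item If $e_0 \ge e_{\min}$: the hypothesis $|a| \le \max\mathbb{F}_b < 2^{e_{\max}+1}$ gives $e_0 \le e_{\max}$, so $a$ is a normal floating-point number.
\item If $e_0 < e_{\min}$: then $|a| < 2^{e_{\min}}$, and the hypothesis that $a$ is an integral multiple of $\eta$ gives $a = M'\eta$ with $|M'| < 2^{e_{\min}}/\eta = 2^{\mathtt{p}-1}$. So $a$ is a subnormal number, or zero.
\end{itemize}

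The main obstacle is only bookkeeping: the boundary case $|a| = 2^k$ (which forces $|j| = 2^{\mathtt{p}}$), and the fact that $k$ may be far from $e_0$. Both are handled by renormalizing with $e_0$ rather than working with $k$ directly. The subnormal range is precisely why the hypothesis $a \in \eta\mathbb{Z}$ is needed, and the overflow range is why $|a|\le\max\mathbb{F}_b$ is needed.
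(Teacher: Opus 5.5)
Your proof is correct and complete. The paper gives no proof of this lemma: it is listed among the auxiliary ``Properties'' and, as its title says, treated as a restatement of the definition of $\mathbb{F}_b$. Your argument therefore supplies the routine verification that the authors omit, rather than competing with a different route.

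Two small remarks.
\begin{itemize}
\item The forward direction needs no normal/subnormal split. From any representation $a = M2^{e-\mathtt{p}+1}$ with $|M|<2^{\mathtt{p}}$ and $e_{\min}\le e\le e_{\max}$, the choice $k=e+1$ works directly. This direction uses neither standing hypothesis, since both hold automatically for elements of $\mathbb{F}_b$.
\item In the reverse direction, you renormalize to $e_0=\lfloor\log_2|a|\rfloor$ instead of working with the given $k$. That is exactly what the paper's application requires. In the proof of Lemma~\ref{lem2} the lemma is invoked with $2^k = 2^{\lceil\log_2\rho\rceil}\cdot 2\,\mathrm{ufp}(r)$. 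This can lie well above the binade of $s_{\ell1}(W_\ell)_{ij}$, or of the partial sums $\sum_\ell s_{\ell1}(W_\ell)_{ij}$, which may cancel. There the two standing hypotheses (integral multiple of $\eta$, no overflow) hold trivially, because all quantities are far from both the subnormal and the overflow ranges.
\end{itemize}

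Your identification of where each hypothesis enters is accurate: the $\eta\mathbb{Z}$ hypothesis is needed only when $e_0<e_{\min}$, and the bound by $\max\mathbb{F}_b$ only to ensure $e_0\le e_{\max}$.
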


\begin{lem}[\cite{rump2008accurate,jeannerod2018relative}]\label{lem:basic-err}
For $a \in \mathbb{R}$, let $\hat{a} \in \mathbb{F}_{b}$ be a nearest floating-point number of $a$.
Assume that $\hat{a} \neq 0$.
Suppose that $u'_b := u_b/(1+u_b)$.
Then, 
\begin{equation*}
    |a - \hat{a}| \le u_b \cdot \mathrm{ufp}(a) \le u_b \cdot \mathrm{ufp}(\hat{a})
\end{equation*}
and
\begin{equation*}
    |a - \hat{a}| \le u'_b |a|
\end{equation*}
hold.
\end{lem}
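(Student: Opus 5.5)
We work with $a>0$; the case $a<0$ follows by symmetry of $\mathbb{F}_b$ under negation. The case $a=0$ cannot occur, because then $\hat a=0$, which is excluded. Write $\mathrm{ufp}(a)=2^E$, so that $2^E\le a<2^{E+1}$.

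We argue in the normal range: $2^E$ is at least the smallest normal power of two. In that range the elements of $\mathbb{F}_b$ in $[2^E,2^{E+1}]$ are exactly the numbers $2^E(1+2u_bj)$ for $j=0,1,\dots,1/(2u_b)$, so the spacing is $2u_b2^E$. This is the content of Lemma~\ref{lem:fl} with exponent $E+1$.

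If $a$ lies below the normal range, the first inequality can fail for a general real $a$, since the spacing there exceeds $2u_b\,\mathrm{ufp}(a)$. I would therefore state explicitly that we assume no underflow, as in the cited references. Alternatively, in the subnormal range we may assume $a$ is a multiple of the minimum positive float, which covers every use in this paper, and then $\hat a=a$ by Lemma~\ref{lem:fl}, so all bounds hold trivially.

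\textbf{First chain.} Both endpoints $2^E$ and $2^{E+1}$ belong to $\mathbb{F}_b$. Hence a nearest float $\hat a$ lies in $[2^E,2^{E+1}]$ and is at distance at most half the spacing from $a$. This gives $|a-\hat a|\le u_b2^E=u_b\,\mathrm{ufp}(a)$. Since $\hat a\ge 2^E$, we get $\mathrm{ufp}(\hat a)\ge 2^E=\mathrm{ufp}(a)$. This holds even when $\hat a=2^{E+1}$, where $\mathrm{ufp}(\hat a)=2^{E+1}$. This proves $|a-\hat a|\le u_b\,\mathrm{ufp}(a)\le u_b\,\mathrm{ufp}(\hat a)$.

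\textbf{Relative bound.} We prove $|a-\hat a|\le u_b|a|/(1+u_b)$ by splitting into two cases.
\begin{itemize}
\item If $a\ge 2^E(1+u_b)$, then $u_b2^E\le u_ba/(1+u_b)$, and the first chain finishes the argument.
\item If $2^E\le a<2^E(1+u_b)$, then $a$ is closer to $2^E$ than to the next float $2^E(1+2u_b)$, so $\hat a=2^E$. The required inequality $a-2^E\le u_ba/(1+u_b)$ rearranges to $a/(1+u_b)\le 2^E$, which is exactly the case hypothesis. Ties under round-to-nearest-even are harmless, since both candidates satisfy the distance bound.
\end{itemize}

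The only delicate point is the boundary and underflow bookkeeping: stating precisely when the spacing equals $2u_b\,\mathrm{ufp}(a)$, and noting that $\hat a$ may round up to $2^{E+1}$. The arithmetic itself is routine.
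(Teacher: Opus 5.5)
Your proof is correct. The paper gives no proof of this lemma and simply cites \cite{rump2008accurate,jeannerod2018relative}. Your argument is essentially the standard one behind those results. For the first chain you use half the binade spacing $2u_b\,\mathrm{ufp}(a)$. For the relative bound you split at $|a|=(1+u_b)\,\mathrm{ufp}(a)$: below that point $\hat a=\mathrm{ufp}(a)$, and the error $|a|-\mathrm{ufp}(a)$ is at most $u_b|a|/(1+u_b)$. So your proof can stand in for the citation, and it has the advantage of making explicit hypotheses that the statement leaves implicit.

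You are right to exclude underflow. The statement assumes only $\hat a\neq 0$, yet take $a=1.5\eta$, with $\eta$ the smallest positive float. Every nearest float is then at distance $\eta/2$, which violates both $|a-\hat a|\le u_b\,\mathrm{ufp}(a)=u_b\eta$ and $|a-\hat a|\le u'_b|a|$.

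You should exclude \emph{overflow} as well. In the top binade $2^{E+1}\notin\mathbb{F}_b$, so your claim that both endpoints of $[2^E,2^{E+1}]$ are floats fails there. For $a$ just below $2^{E+1}$ in that binade, the nearest finite float is almost $2u_b\,\mathrm{ufp}(a)$ away. The paper's inner-product lemma carries the proviso ``barring underflow and overflow'', and the same proviso belongs here.

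In the paper the lemma is applied only to integers, to $\mathcal{P}^{-1}$, and to $\mathcal{P}_{inv}$ times an integer. All of these are far from the subnormal range, so the underflow restriction costs nothing in the paper's uses.
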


\begin{lem}[\cite{Jeannerod2013improved}]\label{lem:prod-err}
For $x,y \in \mathbb{F}^n_b$, barring underflow and overflow, 
\[
|x^Ty - \FL{x^Ty}| \le nu_b|x^T||y|.
\]
\end{lem}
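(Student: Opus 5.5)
This lemma is quoted from~\cite{Jeannerod2013improved}, so in the paper it can simply be invoked. If a self-contained argument is wanted, I would prove it by induction on $n$, following the Jeannerod--Rump technique for recursive summation.

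\textbf{Setup.} Assume the dot product is evaluated recursively:
\[
\hat p_i := \FL{x_iy_i},\qquad \hat s_1 := \hat p_1,\qquad \hat s_i := \FL{\hat s_{i-1}+\hat p_i}.
\]
Since nothing underflows, Lemma~\ref{lem:basic-err} gives $|\hat p_i - x_iy_i| \le u'_b|x_iy_i| \le u_b|x_iy_i|$. This settles the case $n=1$. For the inductive step, write
\[
s_{n-1} := \sum_{i<n} x_iy_i,\qquad \sigma := \sum_{i<n}|x_iy_i|,\qquad \delta := \hat s_n - (\hat s_{n-1}+\hat p_n),
\]
and take as induction hypothesis $|\hat s_{n-1}-s_{n-1}| \le (n-1)u_b\,\sigma$. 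The total error splits into three pieces:
\[
|\hat s_n - x^Ty| \le |\hat s_{n-1}-s_{n-1}| + |\hat p_n - x_ny_n| + |\delta|.
\]

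\textbf{Two bounds on the addition error.} The goal is to show $|\delta| \le u_b\sigma + u_b|x_ny_n| - u'_b|x_ny_n|$ or some similarly sharp variant; the proof's only delicate part is absorbing the product-rounding slack into the addition error. Two facts about $\delta$ are available.
\begin{itemize}
\item Rounding to nearest gives $|\delta| \le u'_b|\hat s_{n-1}+\hat p_n|$.
\item Because $\hat s_{n-1}$ is itself a floating-point candidate for the rounded sum, the nearest rounding satisfies $|\delta| \le |\hat p_n|$. Symmetrically, $|\delta| \le |\hat s_{n-1}|$.
\end{itemize}

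\textbf{Case split.} The natural threshold compares $|\hat p_n|$ with $u_b\sigma$.
\begin{enumerate}
\item If $|\hat p_n| \le u_b\sigma$, use $|\delta| \le |\hat p_n|$. The total error is then at most
\[
(n-1)u_b\sigma + u_b|x_ny_n| + u_b\sigma = nu_b\sigma + u_b|x_ny_n|,
\]
which is the required $nu_b(\sigma+|x_ny_n|)$ only when $n \ge 1$ absorbs the $u_b|x_ny_n|$ term. It does, since we need $u_b|x_ny_n| \le nu_b|x_ny_n|$.
\item If $|\hat p_n| > u_b\sigma$, use
\[
|\delta| \le u'_b\bigl(|s_{n-1}| + (n-1)u_b\sigma + |\hat p_n|\bigr),
\]
bounding $|s_{n-1}|\le\sigma$ and $|\hat p_n| \le (1+u'_b)|x_ny_n|$. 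The extra terms $(n-1)u_b^2\sigma$ and the product slack must then be absorbed into the spare $(n-1)u_b|x_ny_n|$. This is possible because $(n-1)u_b^2\sigma < (n-1)u_b|\hat p_n|$.
\end{enumerate}

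\textbf{Main obstacle.} The hard step is the bookkeeping in the second case. The constant must come out exactly $nu_b$, with no $O(u_b^2)$ term. I would first try to carry the sharper constant $u'_b = u_b/(1+u_b)$ throughout, since it is exactly what cancels the second-order terms. Should that not close, I would follow Jeannerod's refinement and strengthen the induction hypothesis to the form $|\hat s_{n-1}-s_{n-1}| \le (n-1)u_b\sigma$ using the ufp-based bound $|\delta| \le u_b\,\mathrm{ufp}(\hat s_n)$ from Lemma~\ref{lem:basic-err}.
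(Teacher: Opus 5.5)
Citing \cite{Jeannerod2013improved} is exactly what the paper does. Lemma~\ref{lem:prod-err} is quoted there with no proof of its own, so your first sentence already matches the paper. Your optional self-contained induction is a genuinely different route, and two points about it need correcting.

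First, it covers only left-to-right evaluation. Your inductive step assumes the last operation adds a single rounded product $\hat p_n$ to a partial sum. In the paper, $\mathrm{fl}(\cdot)$ fixes no evaluation order; it is applied, e.g., to $\mathrm{fl}(\sum_\ell s_{\ell 2}W_\ell)$ in Lemma~\ref{lem2}. The cited theorem holds for every order, including trees whose last addition combines two partial dot products, and that generality is what the citation buys.

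Second, the Case~2 justification you wrote does not close. Pushing all of the extra term $(n-1)u_bu'_b\sigma$ onto $|\hat p_n|$ uses up essentially the whole spare $(n-1)u_b|x_ny_n|$; with your cruder $(n-1)u_b|\hat p_n|$ it can even overshoot. That leaves no room for the product error $u'_b|x_ny_n|$. The fix is the identity $u'_b(1+u_b)=u_b$, which absorbs one copy of the extra term into the unused $u_b\sigma$ budget:
\[
|\delta| \le u'_b\bigl(1+(n-1)u_b\bigr)\sigma + u'_b|\hat p_n|
= u_b\sigma + (n-2)u_bu'_b\sigma + u'_b|\hat p_n|
\le u_b\sigma + (n-1)u'_b|\hat p_n|,
\]
where the last step uses $u_b\sigma<|\hat p_n|$. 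Now add the induction hypothesis and $|\hat p_n-x_ny_n|\le u'_b|x_ny_n|$, and use $|\hat p_n|\le(1+u'_b)|x_ny_n|$. The total error is then at most $nu_b\sigma+u'_b\bigl(n+(n-1)u'_b\bigr)|x_ny_n|$. The inequality $u'_b\bigl(n+(n-1)u'_b\bigr)\le nu_b$ is equivalent to $(n-1)u'_b\le nu_b$, which holds.

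So your first remedy, carrying $u'_b$ throughout, is the right one. The fallback you describe is not a strengthening at all, since it restates the induction hypothesis you had already assumed.
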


%====================
\subsection{Proof of Lemma~\ref{lem0}}
\label{sebsec:Proof of Lemma1}
%====================
\begin{proof}
From $\bar{D} := \mathrm{single}_{\bigtriangleup}(\bar{C}) \ge \bar{C} = \bar{A} \bar{B}$ and \eqref{assumption:log2}, we have
\begin{align}
\mathrm{log2f}(\bar{D})
\ge (1-4u_{32})\log_2\bar{D}
\ge (1-4u_{32})\log_2\bar{C}.\label{eq:log2Cbar}
\end{align}
In addition, $\mathrm{single}_{\bigtriangledown}(-0.5/(1-4u_{32})) \le -0.5(1 + u_{32})/(1-4u_{32})$ holds.
Using this and~\eqref{eq:log2Cbar} yields
\begin{align*}
    \mathrm{single}_{\bigtriangledown}\left(\frac{-0.5}{1-4u_{32}}\right) \cdot \mathrm{log2f}(\bar{D})
    \le -0.5(1 + u_{32})\log_2\bar{C}.
\end{align*}
Thus, for $e_i := \mathrm{log2f}(\max_h\bar{d}_{ih})$ and $e'_i := \log_2 \max_h\bar{c}_{ih}$, we have
\begin{align*}
    \left\lfloor \FLd{\FMA{\mathrm{single}_{\bigtriangledown}\left(\frac{-0.5}{1-4u_{32}}\right),e_i,\mathcal{P'}}} \right\rfloor
    \le \left\lfloor \mathcal{P}' - 0.5(1 + u_{32})e'_i \right\rfloor.
\end{align*}
For all $\mathcal{P}' = \mathrm{single}_{\bigtriangledown}(\log_2(\prod_{\ell=1}^N p_\ell - 1)/2-0.5)$ computed from the moduli $p_\ell$ defined in~\eqref{p_list} and for all $e'_i$ satisfying $0 \le e'_i \le 29$, we verified by exhaustive evaluation that 
\[
\lfloor \mathcal{P}' - 0.5(1 + u_{32})e'_i \rfloor \le \mathcal{P}' - 0.5e'_i - 2^{-21},
\]
where the constant $2^{-21}$ on the right-hand side was obtained empirically from this exhaustive computation.
Therefore, we derive
\begin{align*}
    \left\lfloor \FLd{\FMA{\mathrm{single}_{\bigtriangledown}\left(\frac{-0.5}{1-4u_{32}}\right),e_i,\mathcal{P'}}} \right\rfloor
    \le \mathcal{P}' - 0.5e'_i - 2^{-21}
    \le \frac{1}{2}\left( \log_2(\mathcal{P}-1) - 1 - e'_i - 2^{-20} \right).
\end{align*}
Similarly, for $f_j := \mathrm{log2f}(\max_h\bar{d}_{hj})$ and $f'_j := \log_2 \max_h\bar{c}_{hj}$, we obtain
\begin{align*}
    \left\lfloor \FLd{\FMA{\mathrm{single}_{\bigtriangledown}\left(\frac{-0.5}{1-4u_{32}}\right),f_j,\mathcal{P'}}} \right\rfloor
    \le \frac{1}{2}\left( \log_2(\mathcal{P}-1) - 1 - f'_j - 2^{-20} \right).
\end{align*}
For $\alpha_i := \lfloor \log_2 \max_{1 \le h \le k}|a_{ih}| \rfloor$ and $\beta_j := \lfloor \log_2 \max_{1 \le h \le k}|b_{hj}| \rfloor$, we obtain
\begin{align*}
    \mu_i 
    &\le \mu'_i + \frac{1}{2}\left( \log_2(\mathcal{P}-1) - 1 - e'_i \right)
    = 5-\alpha_i + \frac{1}{2}\left( \log_2(\mathcal{P}-1) - 1 - e'_i - 2^{-20} \right),\\
    \nu_j 
    &\le \nu'_j + \frac{1}{2}\left( \log_2(\mathcal{P}-1) - 1 - f'_j \right)
    = 5-\beta_j + \frac{1}{2}\left( \log_2(\mathcal{P}-1) - 1 - f'_j - 2^{-20} \right)
\end{align*}
from~\eqref{eq:mu'nu'} and Algorithm~\ref{alg:scaling}.
Therefore, we obtain
\begin{align}
(|A'||B'|)_{ij}
&\le 2^{\mu_i} \cdot (|A||B|)_{ij} \cdot 2^{\nu_j}\\
&\le 2^{5-\alpha_i + (\log_2(\mathcal{P}-1)-1-e'_i-2^{-20})/2} \cdot (|A||B|)_{ij} \cdot 2^{5-\beta_j + (\log_2(\mathcal{P}-1)-1-f'_j-2^{-20})/2}\\
&= (|A||B|)_{ij} \cdot 2^{5-\alpha_i+5-\beta_j + \log_2(\mathcal{P}-1)-1-2^{-20} - (e'_i + f'_j)/2}\\
&\le (|A||B|)_{ij} \cdot 2^{5-\alpha_i+5-\beta_j} \cdot (\mathcal{P}-1) \cdot 2^{-1-2^{-20}} \cdot \bar{c}_{ij}^{-1}\\
&\le (|A||B|)_{ij} \cdot 2^{5-\alpha_i+5-\beta_j} \cdot (\mathcal{P}-1) \cdot 2^{-1-2^{-20}} \cdot (2^{5-\alpha_i}(|A||B|)_{ij}2^{5-\beta_j})^{-1}\\
&= (\mathcal{P}-1) 2^{-1-2^{-20}}\label{eq:|A||B|upper-rigorous}\\
&< \mathcal{P}/2.\label{eq:|A||B|upper}
\end{align}

\end{proof}

%====================
\subsection{Proof of Lemma~\ref{lem1}}
%====================
\begin{proof}
From~\eqref{assumption:log2}, for $O \le \Delta_1 < 2u_{32}E$ and $|\Delta_2| \le 4u_{32}E$, we derive 
\begin{align*}
    \mathrm{log2f}(\bar{d}_{ij}) 
    = ( 1 + (\Delta_2)_{ij} )\log_2((1 + (\Delta_1)_{ij})\bar{c}_{ij})
    = ( 1 + (\Delta_2)_{ij} )(\log_2(1 + (\Delta_1)_{ij}) + \log_2\bar{c}_{ij}).
\end{align*}
This implies that
\begin{align*}
    \mathrm{log2f}(\bar{D})
    \le (1 + 4u_{32})(3u_{32}E + \log_2\bar{C}).
\end{align*}
From this and $\mathrm{single}_{\bigtriangledown}(-0.5/(1-4u_{32})) \ge -0.5(1 + 6u_{32})$,
\begin{align*}
    \mathrm{single}_{\bigtriangledown}\left(\frac{-0.5}{1-4u_{32}}\right) \cdot \mathrm{log2f}(\bar{D})
    &\ge -0.5(1 + 6u_{32})(1 + 4u_{32})(3u_{32}E + \log_2\bar{C})\\
    &= -0.5\log_2\bar{C} - (5u_{32} + 12u_{32}^2)\log_2\bar{C} - 1.5u_{32}(1 + 10u_{32} + 24u_{32}^2)E
\end{align*}
We have $\log_2\bar{C} \le 29$ from~\eqref{eq:2^29}; thus, 
\begin{align*}
    \mathrm{single}_{\bigtriangledown}\left(\frac{-0.5}{1-4u_{32}}\right) \cdot \mathrm{log2f}(\bar{D})
    \ge -0.5\log_2\bar{C} - 147u_{32}E.
\end{align*}
Thus, for $e_i := \mathrm{log2f}(\max_h\bar{d}_{ih})$ and $e'_i := \log_2 \max_h\bar{c}_{ih}$, we have
\begin{align*}
    \left\lfloor \FLd{\FMA{\mathrm{single}_{\bigtriangledown}\left(\frac{-0.5}{1-4u_{32}}\right),e_i,\mathcal{P'}}} \right\rfloor
    &\ge \left\lfloor (1-2u_{32})\left( \mathcal{P}' - 0.5e'_i - 147u_{32} \right) \right\rfloor\\
    &\ge \left( \frac{\log_2(\mathcal{P}-1)}{2} - 0.5 \right) - \frac{e'_i}{2} - 2\\
    &\ge \frac{1}{2}\left( \log_2(\mathcal{P}-1) - e'_i - 5 \right).
\end{align*}
Similarly, for $f_j := \mathrm{log2f}(\max_h\bar{d}_{hj})$ and $f'_j := \log_2 \max_h\bar{c}_{hj}$, we obtain
\begin{align*}
    \left\lfloor \FLd{\FMA{\mathrm{single}_{\bigtriangledown}\left(\frac{-0.5}{1-4u_{32}}\right),f_j,\mathcal{P'}}} \right\rfloor
    \ge \frac{1}{2}\left( \log_2(\mathcal{P}-1) - f'_j - 5 \right).
\end{align*}
For $\alpha_i := \lfloor \log_2 \max_{1 \le h \le k}|a_{ih}| \rfloor$ and $\beta_j := \lfloor \log_2 \max_{1 \le h \le k}|b_{hj}| \rfloor$, we obtain
\begin{align*}
    \mu_i 
    &\ge \mu'_i + \frac{1}{2}\left( \log_2(\mathcal{P}-1) - e'_i - 5 \right)
    = -\alpha_i + \frac{1}{2}\left( \log_2(\mathcal{P}-1) - e'_i + 5 \right),\\
    \nu_j 
    &\ge \nu'_j + \frac{1}{2}\left( \log_2(\mathcal{P}-1) - f'_j - 5 \right)
    = -\beta_j + \frac{1}{2}\left( \log_2(\mathcal{P}-1) - f'_j + 5 \right)
\end{align*}
from~\eqref{eq:mu'nu'} and Algorithm~\ref{alg:scaling}.

From $A' := \mathrm{trunc}(\mathrm{diag}(2^{\mu}) \cdot A)$ and $B' := \mathrm{trunc}(B \cdot \mathrm{diag}(2^{\nu}))$, 
the quantities $\Delta_A := A' - \mathrm{diag}(2^{\mu}) \cdot A$ and $\Delta_B := B' - B \cdot \mathrm{diag}(2^{\nu})$ satisfy $|(\Delta_A)_{ij}| < 1$ and $|(\Delta_B)_{ij}| < 1$.
For $t := 1/\sqrt{2^{5}(\mathcal{P}-1)}$, $\alpha'_i := \alpha_i + e'_i/2$, and $\beta'_j := \beta_j + f'_j/2$, we have $2^{-\mu_i} \le t2^{\alpha'_i}$ and $2^{-\nu_j} \le t2^{\beta'_j}$.
Hence, for $v = (1,1,\dots,1)^T \in \mathbb{Z}^k$, we obtain
\begin{align*}
    |\mathrm{diag}(2^{-\mu}) \Delta_A| &< \mathrm{diag}(t2^{\alpha'}) E = t 2^{\alpha'} \cdot v^T,\\
    |\Delta_B \mathrm{diag}(2^{-\nu})| &< E\cdot \mathrm{diag}(t2^{\beta'}) = v \cdot (t 2^{\beta'})^T.
\end{align*}
Therefore, 
\begin{align*}
    | AB - \mathrm{diag}(2^{-\mu}) \cdot A'B' \cdot \mathrm{diag}(2^{-\nu}) |
    &= |AB - (A + \mathrm{diag}(2^{-\mu}) \Delta_A)(B + \Delta_B \mathrm{diag}(2^{-\nu}))|\\
    &\le |A| |\Delta_B \mathrm{diag}(2^{-\nu})| + |\mathrm{diag}(2^{-\mu}) \Delta_A| |B| + |\mathrm{diag}(2^{-\mu}) \Delta_A| |\Delta_B \mathrm{diag}(2^{-\nu})|\\
    &\le t |A| v (2^{\beta'})^T
    + t 2^{\alpha'} v^T |B|
    + kt^2 2^{\alpha'} (2^{\beta'})^T.
\end{align*}

\end{proof}

%====================
\subsection{Proof of Lemma~\ref{lem2}}
\label{sebsec:Proof of Lemma3}
%====================
\begin{proof}
First, we give the proof for $b = 64$.
For $r := \max_{1 \le \ell \le N} \mathcal{P}/p_\ell \cdot q_\ell$, $0 < r < \mathcal{P}$ holds.
From the definition of $s_{\ell 1}$, for $\rho = \sum_{\ell =1}^N \lfloor p_\ell/2 \rfloor$, we have
\begin{equation}\label{eq:s_i1}
    0 \le s_{\ell 1} < 2\mathrm{ufp}(r),\quad
    s_{\ell 1} \in 2^{\lceil \log_2 \rho \rceil} u_{64} \cdot 2\mathrm{ufp}(r)\mathbb{Z}.
\end{equation}
With $W_\ell \in \mathbb{Z}_8^{m \times n}$, we derive 
\[
    |s_{\ell 1} W_\ell| < \left\lfloor \frac{p_\ell}{2} \right\rfloor \cdot 2\mathrm{ufp}(r) E,\quad
    s_{\ell 1} W_\ell \in 2^{\lceil \log_2 \rho \rceil} u_{64} \cdot 2\mathrm{ufp}(r)\mathbb{Z}^{m \times n}.
\]
Therefore, $(s_{\ell 1} W_\ell)_{ij}$ complies with the definition of a double-precision floating-point number as Lemma~\ref{lem:fl}; thus, $\FL{s_{\ell 1} W_\ell} = s_{\ell 1} W_\ell$ holds.
In addition,
\[
    \sum_{\ell=1}^N |s_{\ell 1} W_\ell| \le \rho \cdot 2\mathrm{ufp}(r)E,\quad
    \sum_{\ell =1}^N s_{\ell 1} W_\ell \in 2^{\lceil \log_2 \rho \rceil} u_{64} \cdot 2\mathrm{ufp}(r)\mathbb{Z}^{m \times n}.
\]
Thus, Lemma~\ref{lem:fl} implies that $C'^{(1)} = \mathrm{fl}(\sum_{\ell =1}^N s_{\ell 1} W_\ell) = \sum_{\ell =1}^N s_{\ell 1} W_\ell$.

From Lemma~\ref{lem:basic-err}, $s_{\ell 2} = \mathrm{double}(\mathcal{P}/p_\ell \cdot q_\ell - s_{\ell 1}) = \mathcal{P}/p_\ell \cdot q_\ell - s_{\ell 1} + \delta_{s_{\ell 2}}$, where
\begin{equation}\label{eq:delta_s_{i2}}
    |\delta_{s_{\ell 2}}| 
    \le u'_{64} \cdot \left(\frac{\mathcal{P}}{p_\ell} q_\ell - s_{\ell 1}\right).
\end{equation}
For $\Delta_{C'^{(2)}} := \mathrm{fl}(\sum_{\ell =1}^N s_{\ell 2} W_\ell) - \sum_{\ell =1}^N s_{\ell2} W_\ell$, we have
\begin{align}
C'^{(1)} + C'^{(2)} 
= \sum_{\ell=1}^N s_{\ell1} W_\ell + \sum_{\ell=1}^N s_{\ell2} W_\ell + \Delta_{C'^{(2)}}
= \sum_{\ell=1}^N \frac{\mathcal{P}}{p_\ell} q_\ell W_\ell + \sum_{\ell=1}^N \delta_{s_{\ell2}} W_\ell + \Delta_{C'^{(2)}}\label{eq:C'^{(1)} + C'^{(2)}}.
\end{align}
From~\eqref{eq:delta_s_{i2}},
\begin{equation}\label{eq:deltaW}
    \left| \sum_{\ell=1}^N \delta_{s_{\ell2}} W_\ell \right|
    \le u'_{64}\sum_{\ell =1}^N \left(\frac{\mathcal{P}}{p_\ell} q_\ell - s_{\ell 1}\right) |W_\ell|.
\end{equation}
Lemma~\ref{lem:prod-err} implies that
\begin{align}
    \left|\Delta_{C'^{(2)}}\right| 
    \le Nu_{64} \sum_{\ell =1}^N |s_{\ell2} W_\ell|
    \le Nu_{64}(1+u'_{64})\sum_{\ell =1}^N \left(\frac{\mathcal{P}}{p_\ell} q_\ell - s_{\ell 1}\right) |W_\ell|.
    \label{eq:Delta_C'^{(2)}}
\end{align}
From~\eqref{eq:C'^{(1)} + C'^{(2)}}, \eqref{eq:deltaW}, and \eqref{eq:Delta_C'^{(2)}}, for $\Delta_{C'} := \sum_{\ell=1}^N \mathcal{P}/p_\ell \cdot q_\ell\cdot W_\ell - (C'^{(1)} + C'^{(2)})$,
\begin{align}
    \left| \Delta_{C'} \right|
    \le (Nu_{64} (1+u'_{64})+u'_{64})\sum_{\ell =1}^N \left(\frac{\mathcal{P}}{p_\ell} q_\ell - s_{\ell 1}\right) |W_\ell|.
\end{align}
Now, we have $\mathcal{P}/p_\ell \cdot q_\ell - s_{\ell 1} < 2^{1+\lceil \log_2 \rho \rceil}u_{64}\mathcal{P}$ from~\eqref{eq:s_i1}, $|W_\ell| \le \lfloor p_\ell/2 \rfloor E$ from the definition of the $\bmod$ operation, and $\rho = \sum_{\ell =1}^N \lfloor p_\ell/2 \rfloor$ from the definition of $\rho$; thus, we obtain
\begin{align}
    \left| \Delta_{C'} \right|
    \le (Nu_{64} (1+u'_{64})+u'_{64}) \cdot 2^{1+\lceil \log_2 \rho \rceil}u_{64}\mathcal{P} \cdot \sum_{\ell =1}^N \left\lfloor \frac{p_\ell}{2} \right\rfloor E
    \le 2^{1+\lceil \log_2 \rho \rceil}(N + 1 + Nu_{64})u_{64}^2 \rho \mathcal{P}E.
    \label{eq:err_C'}
\end{align}

Next, we give the proof for $b = 32$.
From $s_{\ell 1} = \mathrm{double}(\mathcal{P}/p_\ell \cdot q_\ell)$ and Lemma~\ref{lem:basic-err}, 
\begin{equation}\label{eq:s1_32}
    \left| \frac{\mathcal{P}}{p_\ell} q_\ell - s_{\ell 1} \right| 
    \le u'_{64}\frac{\mathcal{P}}{p_\ell} q_\ell
    \le u'_{64}\mathcal{P}.
\end{equation}
Lemma~\ref{lem:prod-err} and $s_{\ell 1} \le \mathcal{P}$ imply that
\begin{equation}\label{eq:errC'1}
    \left| C'^{(1)} - \sum_{\ell=1}^N s_{\ell 1} W_\ell \right|
    \le Nu_{64} \sum_{\ell=1}^N s_{\ell 1} |W_\ell|
    \le Nu_{64} \rho \mathcal{P} E.
\end{equation}
Thus, \eqref{eq:s1_32} and \eqref{eq:errC'1} yield
\begin{align}
    \left| C'^{(1)} - \sum_{\ell=1}^N \frac{\mathcal{P}}{p_\ell} q_\ell W_\ell \right|
   &\le \left| C'^{(1)} - \sum_{\ell=1}^N s_{\ell 1} W_\ell \right|
      + \left| \sum_{\ell=1}^N s_{\ell 1} W_\ell - \sum_{\ell=1}^N \frac{\mathcal{P}}{p_\ell} q_\ell W_\ell \right|\\
   &\le Nu_{64} \rho \mathcal{P} E + u'_{64} \rho \mathcal{P} E\\
   &\le (N+1)u_{64} \rho \mathcal{P}E.\label{eq:err_C'_32}
\end{align}
\end{proof}

%====================
\subsection{Proof of Lemma~\ref{lem3}}
%====================
\begin{proof}
For $C'_{\mathit{exact}} := \sum_{\ell=1}^N \mathcal{P}/p_\ell \cdot q_\ell \cdot W_\ell$ and $Q_{\mathit{exact}} := \mathrm{round}(\mathcal{P}^{-1} C'_{\mathit{exact}})$,
\begin{equation}\label{eq:A'B'}
A'B' 
= \bmod(C'_{\mathit{exact}},\mathcal{P})
= C'_{\mathit{exact}} - \mathcal{P}Q_{\mathit{exact}} \in \mathbb{Z}^{m \times n}
\end{equation}
holds.
Define $\Delta_{Q_1} \in \mathbb{R}^{m \times n}$ as
\begin{equation}\label{def:deltaQ}
\Delta_{Q_1} := Q_{\mathit{exact}} - \mathcal{P}^{-1}C'_{\mathit{exact}}.
\end{equation}
Then, we have
\begin{equation}\label{eq:deltaQ}
    \left|\Delta_{Q_1}\right| 
    \le \frac{\mathcal{P}-1}{2^{1+2^{-20}}\mathcal{P}}E
    < \frac{\mathcal{P}-1}{2(1 + 2^{-21})\mathcal{P}}E
\end{equation}
because $|\mathcal{P} \Delta_{Q_1}| = |A'B'| \le |A'||B'| \le (\mathcal{P}-1) 2^{-1-2^{-20}}$ from \eqref{eq:|A||B|upper-rigorous}.

For $b=64$, from $s_{\ell 1} < \mathcal{P}$,
\begin{equation}\label{eq:C'1upper64}
|C'^{(1)}| 
= \left| \sum_{\ell =1}^N s_{\ell 1} W_\ell \right|
\le \rho \mathcal{P}E.
\end{equation}
For $b = 32$, from Lemma~\ref{lem:prod-err}, we have
\begin{equation}
|C'^{(1)}| \le (1+Nu_{64})\left| \sum_{\ell =1}^N s_{\ell 1} W_\ell \right|.
\end{equation}
From Lemma~\ref{lem:basic-err} and $q_\ell \le p_\ell-1$,
\begin{align*}
(1+Nu_{64})s_{\ell 1} 
\le (1 + Nu_{64})(1 + u'_{64})\frac{\mathcal{P}}{p_{\ell}}q_{\ell}
\le (1 + Nu_{64})(1 + u'_{64})\left(\mathcal{P} - \frac{\mathcal{P}}{p_{\ell}} \right) \le \mathcal{P}.
\end{align*}
Therefore, we also have
\begin{equation}\label{eq:C'1upper32}
|C'^{(1)}| \le \rho\mathcal{P}E.
\end{equation}
Thus, from \eqref{eq:C'1upper64}, \eqref{eq:C'1upper32}, and Lemma~\ref{lem:basic-err}, we obtain
\begin{align}
    \left|\mathcal{P}_{inv}C'^{(1)} - \FL{\mathcal{P}_{inv}C'^{(1)}}\right|
    \le u'_{64}\mathcal{P}_{inv}|C'^{(1)}|
    \le u'_{64}\mathcal{P}_{inv} \cdot \rho\mathcal{P}E
    \le u'_{64}\mathcal{P}^{-1}(1 + u'_{64}) \cdot \rho\mathcal{P}E
    = \rho u'_{64}(1+u'_{64})E.\label{eq:Qproof1}
\end{align}
Moreover, \eqref{eq:s_i1} implies $|\mathcal{P}/p_\ell \cdot q_\ell - s_{\ell 1}| \le 2^{1 + \lceil \log_2 \rho \rceil}u_{64}\mathcal{P}$ for $b=64$ and \eqref{eq:s1_32} indicates $|\mathcal{P}/p_\ell \cdot q_\ell - s_{\ell 1}| \le u_{64} \mathcal{P}$ for $b = 32$.
From these and~\eqref{eq:C'1upper32}, we derive
\begin{align}
    \left|\mathcal{P}^{-1} C'_{\mathit{exact}} - \mathcal{P}_{inv} C'^{(1)}\right|
    &\le \mathcal{P}^{-1} \left|C'_{\mathit{exact}} - C'^{(1)}\right| + \mathcal{P}^{-1}u'_{64} |C'^{(1)}|\\
    &\le \mathcal{P}^{-1} \left|\sum_{i=1}^N \left(\frac{\mathcal{P}}{p_i}q_i - s_{i1}\right)W_i\right| + \rho u'^2_{64}E\\
    &\le \rho \cdot 2^{1 + \lceil \log_2 \rho \rceil} u_{64} E + \rho u'^2_{64}E\\
    &= \rho (2^{1 + \lceil \log_2 \rho \rceil}u_{64} + u'^2_{64})E.\label{eq:Qproof2}
\end{align}
From \eqref{eq:Qproof1} and \eqref{eq:Qproof2}, for $\Delta_{Q_2} :=\mathcal{P}^{-1} C'_{\mathit{exact}} - \FL{\mathcal{P}_{inv} C'^{(1)}}$, we have
\begin{align}
    |\Delta_{Q_2}|&\le \left|\mathcal{P}^{-1}C'_{\mathit{exact}} - \mathcal{P}_{inv}C'^{(1)}\right| +  \left|\mathcal{P}_{inv}C'^{(1)} - \FL{\mathcal{P}_{inv}C'^{(1)}}\right|\\
    &\le \rho (2^{1 + \lceil \log_2 \rho \rceil}u_{64} + u'^2_{64})E + \rho u'_{64}(1+u'_{64})E\\
    &= \rho (2^{1 + \lceil \log_2 \rho \rceil}u_{64} + u'^2_{64} + 2u'^2_{64})E \\
    &\le \rho (2^{1 + \lceil \log_2 \rho \rceil} + 2)u_{64}E.
    \label{eq:errQ}
\end{align}
From $N \le 49$ and $\rho = \sum_{\ell=1}^N \lfloor p_\ell/2 \rfloor \le 2^7N \le 2^7 49 < 2^{13}$, we have
\[
    |\Delta_{Q_2}| 
    \le \rho (2^{1 + \lceil \log_2 \rho \rceil} + 2)u_{64}E
    \le 2^{7}49 (2^{1 + 13} + 2)u_{64}E
    < 2^{27}u_{64}E.
\]
Therefore, \eqref{eq:deltaQ} and this imply that
\begin{align*}
\left|Q_{\mathit{exact}} - \FL{\mathcal{P}_{inv}C'^{(1)}}\right| 
&\le \left| Q_{exacxt} - \mathcal{P}^{-1}C'_{\mathit{exact}} \right| + \left| \mathcal{P}^{-1} C'_{\mathit{exact}} - \FL{\mathcal{P}_{inv}C'^{(1)}} \right|\\
&\le \Delta_{Q_1} + \Delta_{Q_2}\\
&\le \frac{\mathcal{P}-1}{2(1 + 2^{-21})\mathcal{P}}E + 2^{27}u_{64}E\\
&= \frac{\mathcal{P}-1 + 2^{27}u_{64} \cdot 2(1 + 2^{-21})\mathcal{P}}{2(1 + 2^{-21})\mathcal{P}}E\\
&< \frac{(1 + 2^{-24})\mathcal{P}-1}{2(1 + 2^{-21})\mathcal{P}}E\\
&< \frac{E}{2}.
\end{align*}
Thus, we obtain $Q = \mathrm{round}(\mathrm{fl}(\mathcal{P}_{inv}C'^{(1)})) = Q_{\mathit{exact}}$, as shown in Figure~\ref{fig:round}.
\begin{figure}[htbp]
    \centering
    \includegraphics[width=.6\hsize]{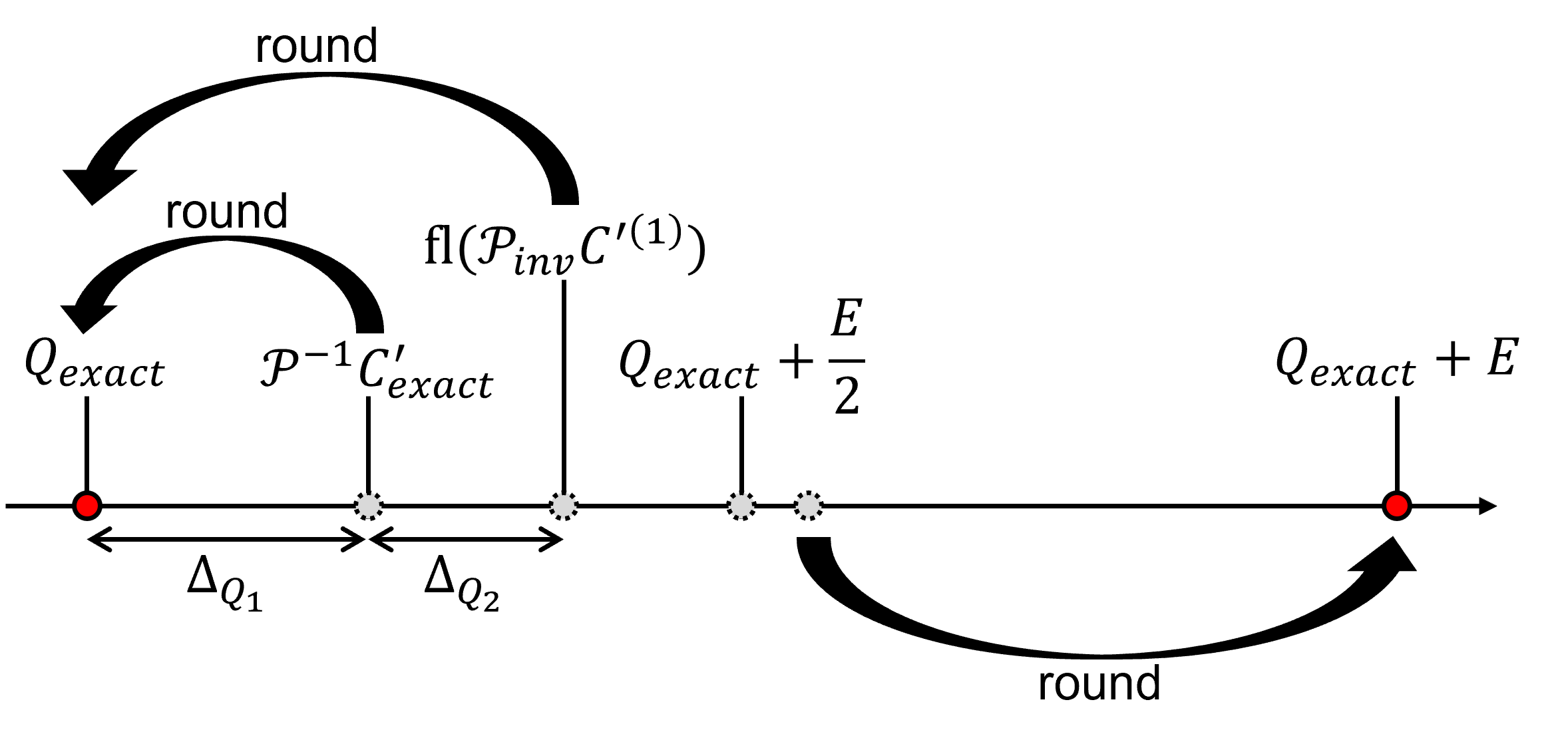}
    \caption{Diagram of $Q$, $\Delta_{Q_1}$, and $\Delta_{Q_2}$.}
    \label{fig:round}
\end{figure}
\end{proof}

%====================
\subsection{Proof of Lemma~\ref{lem4}}
%====================
\begin{proof}
Let $C'_{\mathit{exact}} := \sum_{\ell=1}^N \mathcal{P}/p_\ell \cdot q_\ell \cdot W_\ell$, $D_{64} := C'^{(1)} + C'^{(2)} - (\mathcal{P}_1 + \mathcal{P}_2)Q$, and $D_{32} := C'^{(1)}_{ij} - \mathcal{P}_1 Q_{ij}$.
We analyze the error between $A'B'$ and $C''_{b}$ based on
\begin{equation}\label{eq:errA'B'}
    \left| A'B' - C''_{b} \right| \le \left| A'B' - D_{b} \right| + \left|D_{b} - C''_{b}\right|.
\end{equation}
First, we give the proof for $b = 64$.
From the definition of a double-double number, 
\begin{equation}\label{eq:errP}
    |(\mathcal{P}_1 + \mathcal{P}_2) - \mathcal{P}| \le u_{64}^2\frac{\mathcal{P}}{2}.
\end{equation}
In addition, we have
\begin{align}
    |Q| 
    \le \left| \mathcal{P}^{-1}C'_{\mathit{exact}} \right| + \frac{E}{2}
    = \sum_{i=1}^N \frac{q_i}{p_i} |W_i| + \frac{E}{2}
    \le \left( \rho + \frac{1}{2}\right) E.\label{eq:upperboundQ}
\end{align}
Thus, from~\eqref{eq:err_C'}, \eqref{eq:A'B'}, \eqref{eq:errP}, and \eqref{eq:upperboundQ}, 
\begin{align}
\left| A'B' - D_{64} \right|
&\le \left| C'_{\mathit{exact}} - (C'^{(1)} + C'^{(2)}) \right| 
+ \left| \mathcal{P} - (\mathcal{P}_1 + \mathcal{P}_2) \right| \left| Q \right|\\
&\le 2^{1+\lceil \log_2 \rho \rceil}(N + 1 + Nu_{64})u_{64}^2\rho\mathcal{P}E + \frac{u_{64}^2}{2}\mathcal{P}\left( \rho + \frac{1}{2}\right)E\\
&\le 2^{1+\lceil \log_2 \rho \rceil}(N + 1 + Nu_{64} + 2^{-2-\lceil \log_2 \rho \rceil})u_{64}^2\rho\mathcal{P}E + \frac{1}{4}u_{64}^2\mathcal{P}E\\
&\le 2^{1+\lceil \log_2 \rho \rceil}(N + 2)u_{64}^2\rho\mathcal{P}E.
\label{eq:A'B'-D64}
\end{align}
From Lemma~\ref{lem:basic-err}, for $1 - u'_{64} \le (\Delta_1)_{ij},(\Delta_2)_{ij},(\Delta_3)_{ij} \le 1 + u'_{64}$, we have
\begin{align}
    (C''_{64})_{ij}
    &= \FL{\FMA{-Q_{ij}, \mathcal{P}_2,\FMA{-Q_{ij},\mathcal{P}_1,C'^{(1)}_{ij}} + C'^{(2)}_{ij}}}\\
    &= (\Delta_1)_{ij}\left( \FL{\FMA{-Q_{ij},\mathcal{P}_1,C'^{(1)}_{ij}} + C'^{(2)}_{ij}} - \mathcal{P}_2 Q_{ij} \right)\\
    &= (\Delta_1)_{ij}\left( (\Delta_2)_{ij}\left( \FL{\FMA{-Q_{ij},\mathcal{P}_1,C'^{(1)}_{ij}}} + C'^{(2)}_{ij} \right) - \mathcal{P}_2 Q_{ij} \right)\\
    &= (\Delta_1)_{ij}\left( (\Delta_2)_{ij}\left( (\Delta_3)_{ij}\left( C'^{(1)}_{ij} - \mathcal{P}_1 Q_{ij} \right) + C'^{(2)}_{ij} \right) - \mathcal{P}_2 Q_{ij} \right).\label{eq:C''upper}
\end{align}
From $(1+u'_{64})^3 \le 1 + 3u_{64}$ and \eqref{eq:C''upper}, we derive
\begin{equation}\label{eq:errC''}
    \left|D_{64} - C''_{64}\right|
    \le 3u_{64} \cdot \left| D_{64} \right|.
\end{equation}
Thus, from~\eqref{eq:A'B'-D64} and $A'B' = C'_{\mathit{exact}} - \mathcal{P}Q$,
\begin{align}
\left| D_{64} \right|
&= \left| C'^{(1)} + C'^{(2)} - (\mathcal{P}_1 + \mathcal{P}_2)Q \right|\\
&\le \left| (C'^{(1)} + C'^{(2)}) - C'_{\mathit{exact}} \right| 
+ \left| C'_{\mathit{exact}} - \mathcal{P}Q \right|
+ \left| \mathcal{P} - (\mathcal{P}_1 + \mathcal{P}_2) \right| \left| Q \right|\\
&\le 2^{1+\lceil \log_2 \rho \rceil}(N + 2)u_{64}^2\rho\mathcal{P}E + |A'B'|.
\label{eq:upperboundC''exact}
\end{align}
Hence, \eqref{eq:errA'B'}, \eqref{eq:A'B'-D64}, \eqref{eq:errC''}, and \eqref{eq:upperboundC''exact} imply
\begin{align}
    \left| A'B' - C''_{64} \right| 
    &\le 2^{1+\lceil \log_2 \rho \rceil}(N + 2)u_{64}^2\rho\mathcal{P}E
    + 3u_{64}\left( 2^{1+\lceil \log_2 \rho \rceil}(N + 2)u_{64}^2\rho\mathcal{P}E + |A'B'| \right)\\
    &= (1 + 3u_{64})2^{1+\lceil \log_2 \rho \rceil}(N + 2)u_{64}^2\rho\mathcal{P}E + 3u_{64}|A'B'|.
\end{align}

Next, we give the proof for $b = 32$.
From Lemma~\ref{lem:basic-err}, $|\mathcal{P} - \mathcal{P}_1| \le u'_{64}\min(\mathcal{P}, \mathcal{P}_1)$.
Thus, from~\eqref{eq:err_C'_32} and \eqref{eq:upperboundQ},
\begin{align}
    \left| A'B' - D_{32} \right| 
    &= \left| (C'_{\mathit{exact}} - \mathcal{P}Q) - ( C'^{(1)} - \mathcal{P}_1 Q) \right| \\
    &\le \left| C'_{\mathit{exact}} - C'^{(1)} \right| + \left| \mathcal{P} - \mathcal{P}_1 \right| \left|Q\right|\\
    &\le (N+1)u_{64}\rho\mathcal{P}E + u'_{64}\rho\mathcal{P}E
    \le (N+2)u_{64}\rho\mathcal{P}E.\label{eq:A'B'-D32}
\end{align}
From Lemma~\ref{lem:basic-err}, for $1 - u'_{32} \le (\Delta_4)_{ij} \le 1 + u'_{32}$ and $1 - u'_{64} \le (\Delta_5)_{ij} \le 1 + u'_{64}$, 
\begin{equation}
    (C''_{32})_{ij}
    = \mathrm{single}\left(\FL{\FMA{-Q_{ij},\mathcal{P}_1,C'^{(1)}_{ij}}} \right)
    = \mathrm{single}\left((\Delta_5)_{ij}\left( C'^{(1)}_{ij} - \mathcal{P}_1 Q_{ij} \right) \right)
    = (\Delta_4)_{ij}(\Delta_5)_{ij}\left( C'^{(1)}_{ij} - \mathcal{P}_1 Q_{ij} \right).\label{eq:C''upper32}
\end{equation}
Therefore, we obtain
\begin{equation}\label{eq:errC''32}
    \left|D_{32}- C''_{32}\right|
    \le u_{32} \cdot \left| D_{32} \right|.
\end{equation}
From~\eqref{eq:A'B'-D32} and $A'B' = C'_{\mathit{exact}} - \mathcal{P}Q$,
\begin{align}
\left| D_{32} \right|
= \left| C'^{(1)} - \mathcal{P}_1 Q \right|
\le \left| C'^{(1)} - C'_{\mathit{exact}} \right| 
+ \left| C'_{\mathit{exact}} - \mathcal{P}Q \right|
+ \left| \mathcal{P} - \mathcal{P}_1 \right| \left| Q \right|
\le (N+2)u_{64}\rho\mathcal{P}E + |A'B'|.
\label{eq:upperboundC''exact32}
\end{align}
Hence, \eqref{eq:errA'B'}, \eqref{eq:A'B'-D32}, \eqref{eq:errC''32}, and \eqref{eq:upperboundC''exact32} imply
\begin{align}
    \left| A'B' - C''_{32} \right| 
    \le (N+2)u_{64}\rho\mathcal{P}E + u_{32}\left( (N+2)u_{64}\rho\mathcal{P}E + |A'B'| \right)
    = (1 + u_{32})(N+2)u_{64}\rho\mathcal{P}E + u_{32}|A'B'|.
\end{align}

\end{proof}

\subsection{Proof of Theorem~\ref{thm:err}}
\begin{proof}
For $t := 1/\sqrt{2^{5} (\mathcal{P}-1)}$, \eqref{eq:mu_i_upper} and~\eqref{eq:nu_i_upper} in Lemma~\ref{lem1} imply that
\[
    2^{-\mu_i} 2^{-\nu_j}
    \le \frac{1}{2^5(\mathcal{P}-1)} \cdot 2^{\alpha'_i + \beta'_j} 
    = t^2 2^{\alpha'_i + \beta'_j}.
\]
Therefore, from Lemma~\ref{lem4}, for $b \in \{32,64\}$,
\begin{align*}
    \left( \mathrm{diag}(2^{-\mu}) \cdot | C''_{b} - A'B' | \cdot \mathrm{diag}(2^{-\nu}) \right)_{ij}
    \le t^2 2^{\alpha'_i + \beta'_j}R_{b}.
\end{align*}
Thus, from~\eqref{eq:AB_upper} in Lemma~\ref{lem1}, 
we obtain
\begin{align*}
    |(AB - C_{b})_{ij}|
    &\le |(AB - \mathrm{diag}(2^{-\mu}) \cdot C''_{b} \cdot \mathrm{diag}(2^{-\nu}))_{ij}|\\
    &\le |(AB - \mathrm{diag}(2^{-\mu}) \cdot A'B' \cdot \mathrm{diag}(2^{-\nu}))_{ij}|
    + (\mathrm{diag}(2^{-\mu}) \cdot | C''_{b} - A'B' | \cdot \mathrm{diag}(2^{-\nu}))_{ij}\\
    &\le (t |A| v (2^{\beta'})^T
        + t 2^{\alpha'} v^T |B|
        + kt^2 2^{\alpha'} (2^{\beta'})^T)_{ij}
        + (t^2 2^{\alpha'}(2^{\beta'})^T)_{ij}(R_{b})_{ij}\\
    &= (t |A| v (2^{\beta'})^T
        + t 2^{\alpha'} v^T |B|)_{ij}
        + (t^2 2^{\alpha'} (2^{\beta'})^T)_{ij}(kE + R_{b})_{ij}.
\end{align*}

\end{proof}

%====================
\section{Discussion}
\label{sec:Discussion}
%====================
Numerical experiments were conducted on an NVIDIA GeForce RTX 4090 GPU with NVIDIA CUDA Toolkit 13.1.80.
Figures~\ref{fig:comparison_d} and~\ref{fig:comparison_s} compare the theoretical error bounds derived in~\eqref{thm:err-1} and~\eqref{thm:err-2} with the actual numerical errors observed in DGEMM and SGEMM emulation, respectively.
Here, the error is measured as $|AB - C |$, where $C$ denotes the computed result.
Let $m=n=128$ and $k=8192$.
Each element of the test matrices $A \in \mathbb{F}_b^{m \times k}$ and $B \in \mathbb{F}_b^{k \times n}$ was generated as
\[
a_{ij}, b_{ij} \approx (\mathrm{rand}-0.5) \cdot \exp(\mathrm{randn} \cdot \phi),
\]
where $\mathrm{rand} \in (0,1] \subset \mathbb{F}_b$ denotes a uniform random number and $\mathrm{randn} \in \mathbb{F}_b$ denotes a standard normal random number. 
The parameter $\phi$ controls the dynamic range of the input matrices.
In both figures, the observed emulation errors (err\_max and err\_min) are consistently bounded by the corresponding theoretical estimates, confirming the validity of the proposed error analysis.
For reference, the maximum errors of native DGEMM and SGEMM are also shown, illustrating the relative accuracy of the emulation results.
The tighter bounds given by~\eqref{thm:err-1} more accurately capture the range of observed errors, whereas~\eqref{thm:err-2} provides a more conservative estimate.
However, the bound in~\eqref{thm:err-1} involves the quantity $|A'B'|$, whose evaluation has a nontrivial computational cost.
In contrast, in~\eqref{thm:err-2}, $|A'B'|$ is replaced by a constant upper bound $\mathcal{P}/2 \cdot E$, allowing the error bound to be estimated using only matrix-vector products.
This property makes~\eqref{thm:err-2} more practical for inexpensive and robust error estimation, despite its increased conservativeness.

\begin{figure}[htbp]
    \centering
    
    \noindent
    \begin{minipage}[b]{0.325\hsize}\centering
    \includegraphics[width=\hsize]{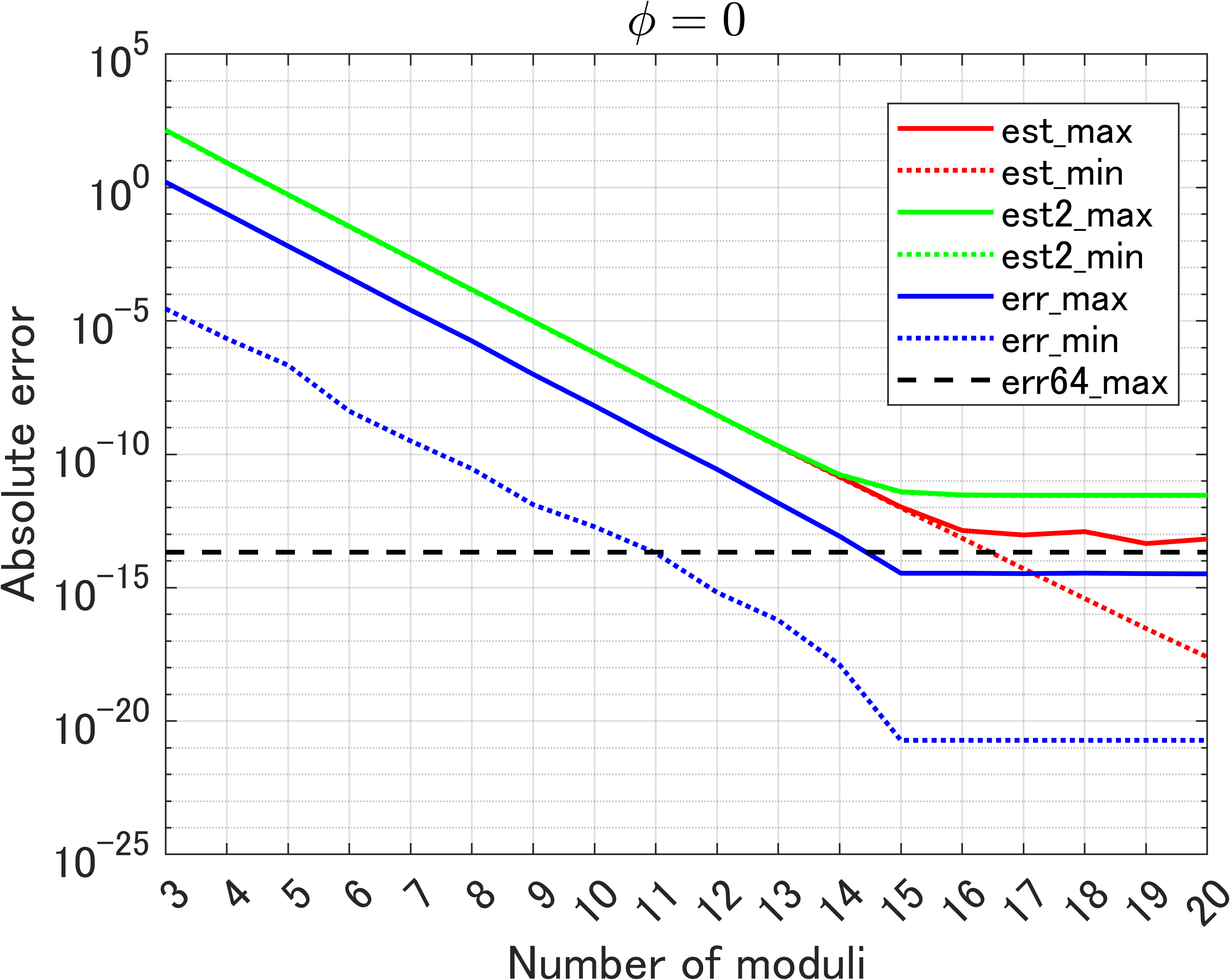}
    \end{minipage}
    \begin{minipage}[b]{0.325\hsize}\centering
    \includegraphics[width=\hsize]{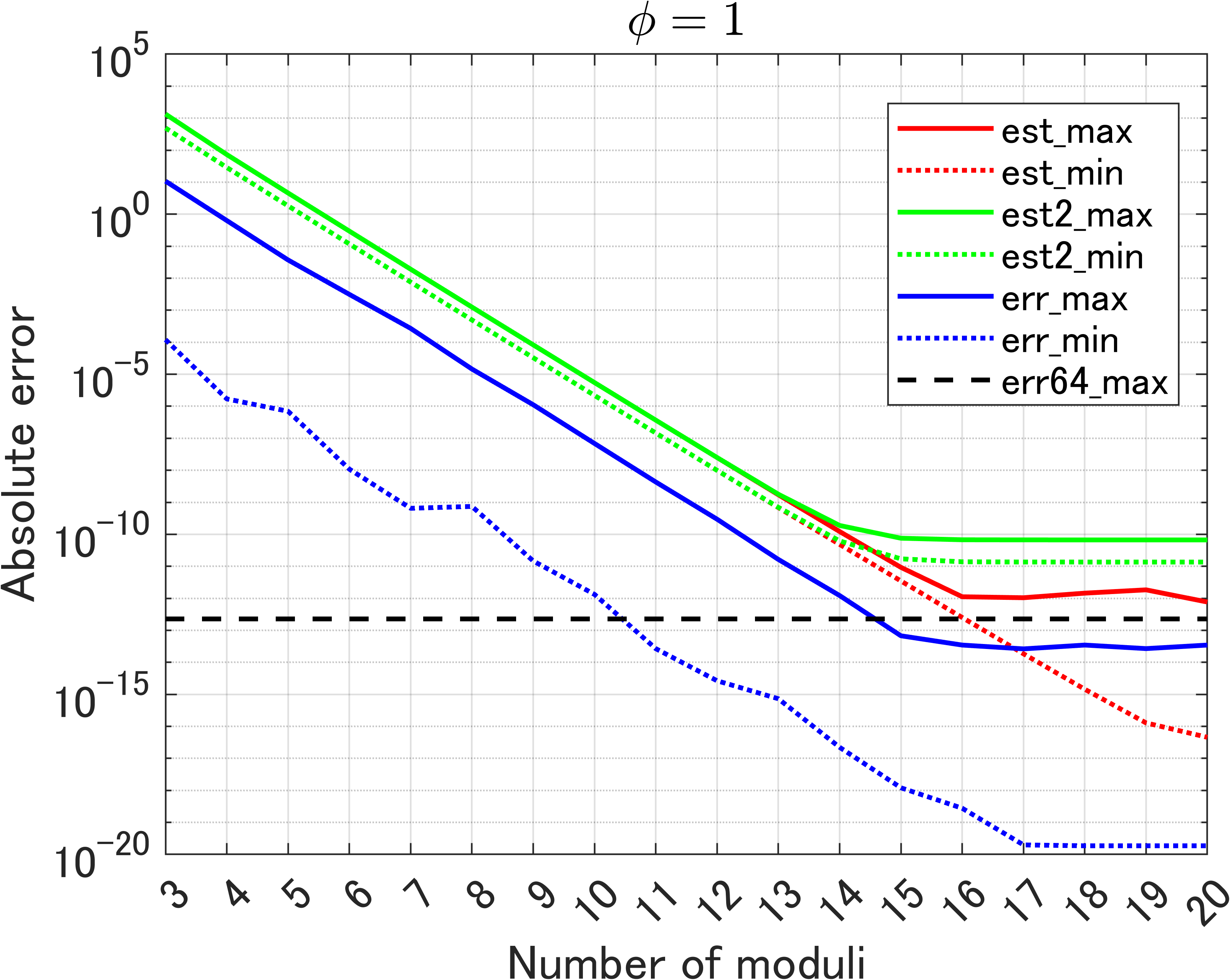}
    \end{minipage}
    \begin{minipage}[b]{0.325\hsize}\centering
    \includegraphics[width=\hsize]{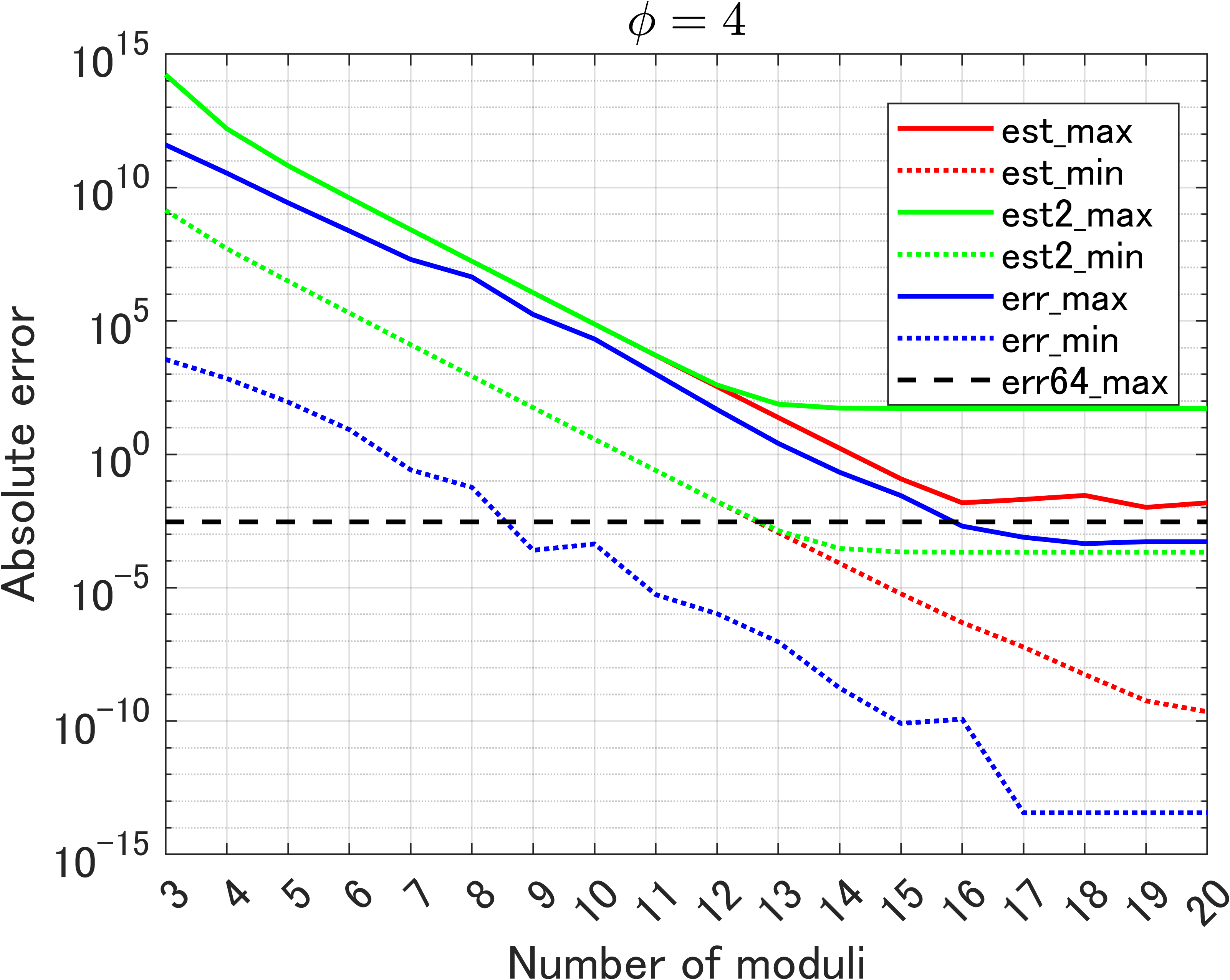}
    \end{minipage}
    
    \caption{Comparison between theoretical error bounds and observed numerical errors for DGEMM emulation. est\_max and est\_min are the maximum and minimum error bounds derived from Theorem~\ref{thm:err} using~\eqref{thm:err-1}, while est2\_max and est2\_min are those obtained from~\eqref{thm:err-2}. err\_max and err\_min are the maximum and minimum values of the actual emulation error, respectively. For reference, err64\_max is the maximum error of native DGEMM.}
    \label{fig:comparison_d}
\end{figure}

\begin{figure}[htbp]
    \centering
    
    \noindent
    \begin{minipage}[b]{0.325\hsize}\centering
    \includegraphics[width=\hsize]{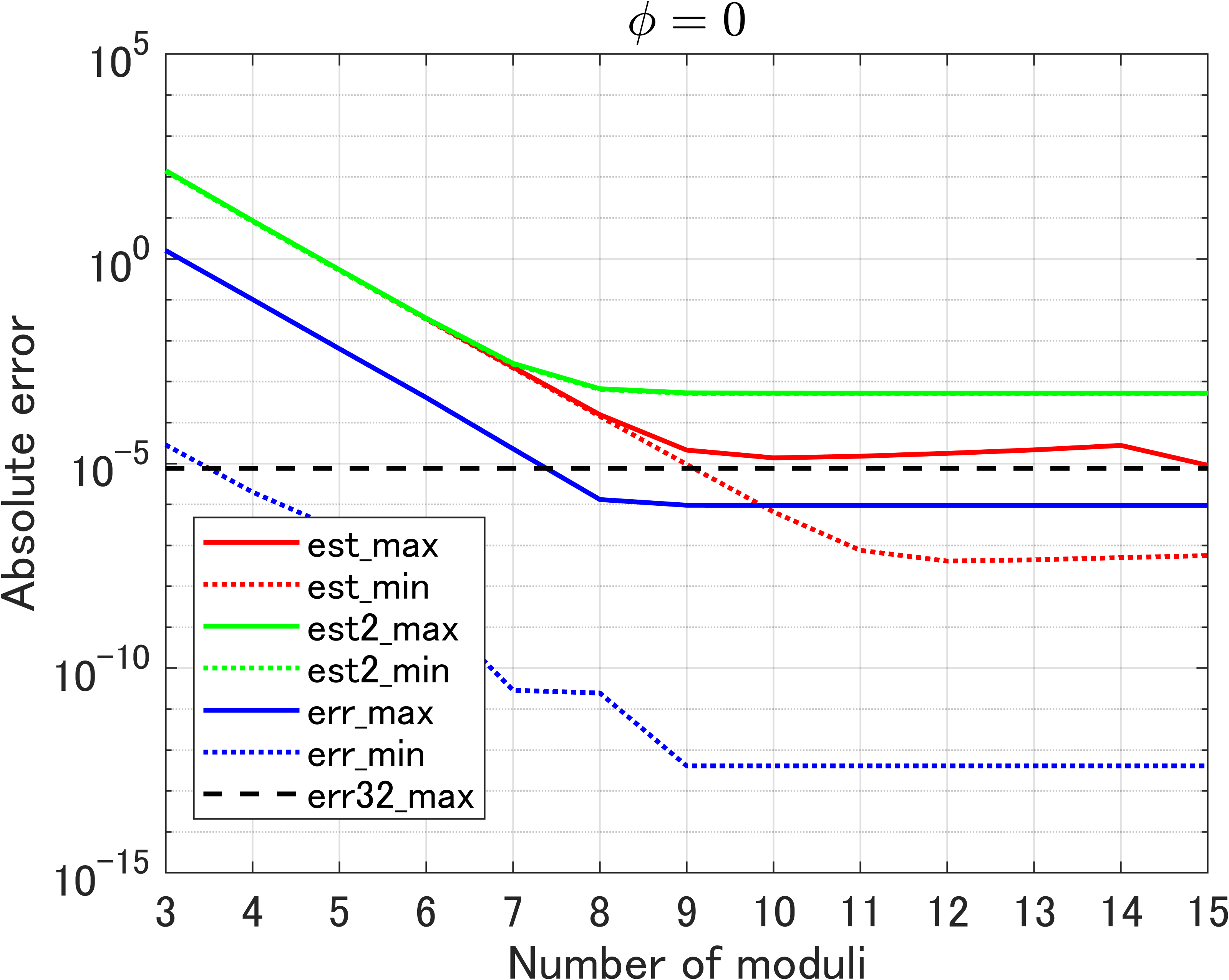}
    \end{minipage}
    \begin{minipage}[b]{0.325\hsize}\centering
    \includegraphics[width=\hsize]{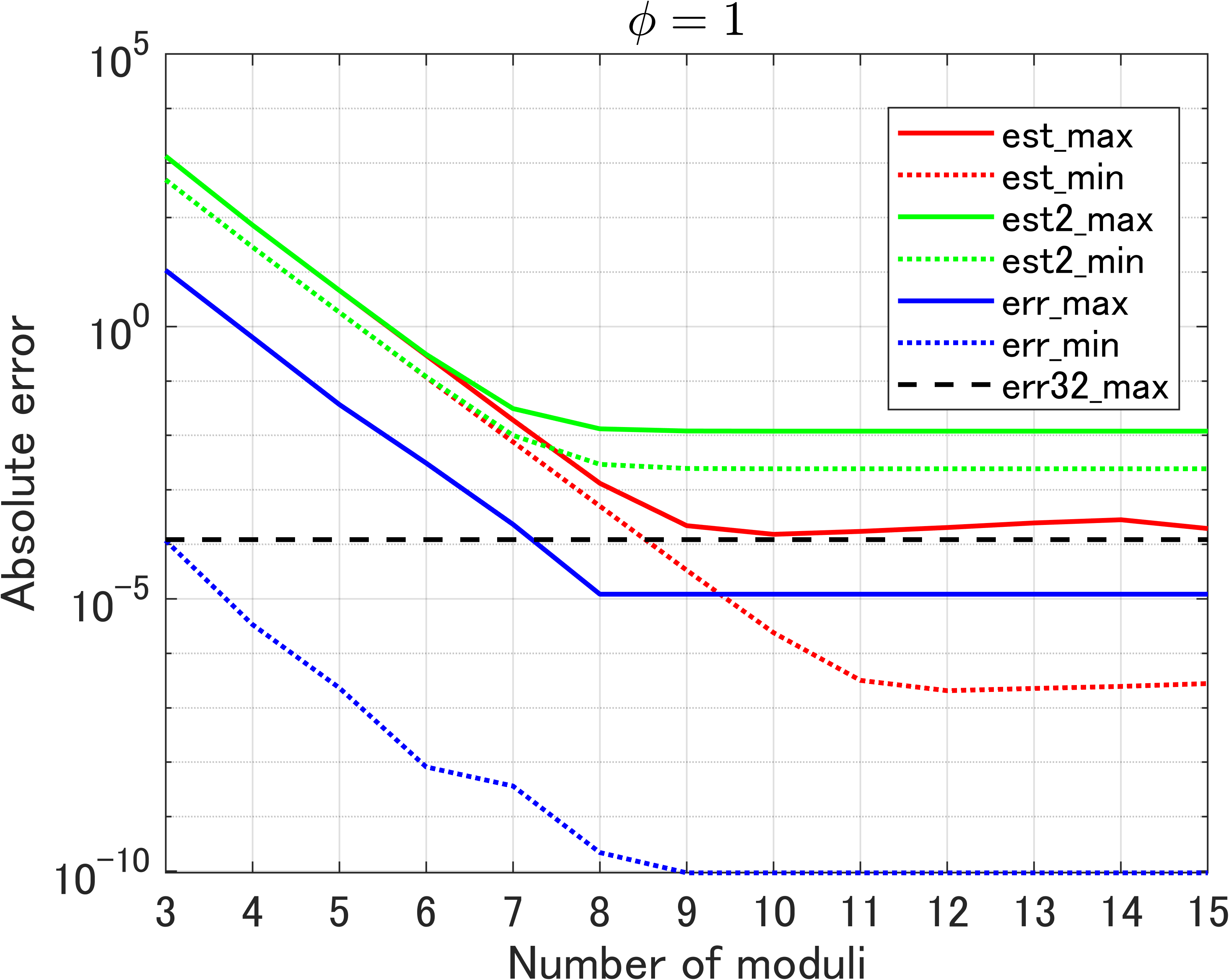}
    \end{minipage}
    \begin{minipage}[b]{0.325\hsize}\centering
    \includegraphics[width=\hsize]{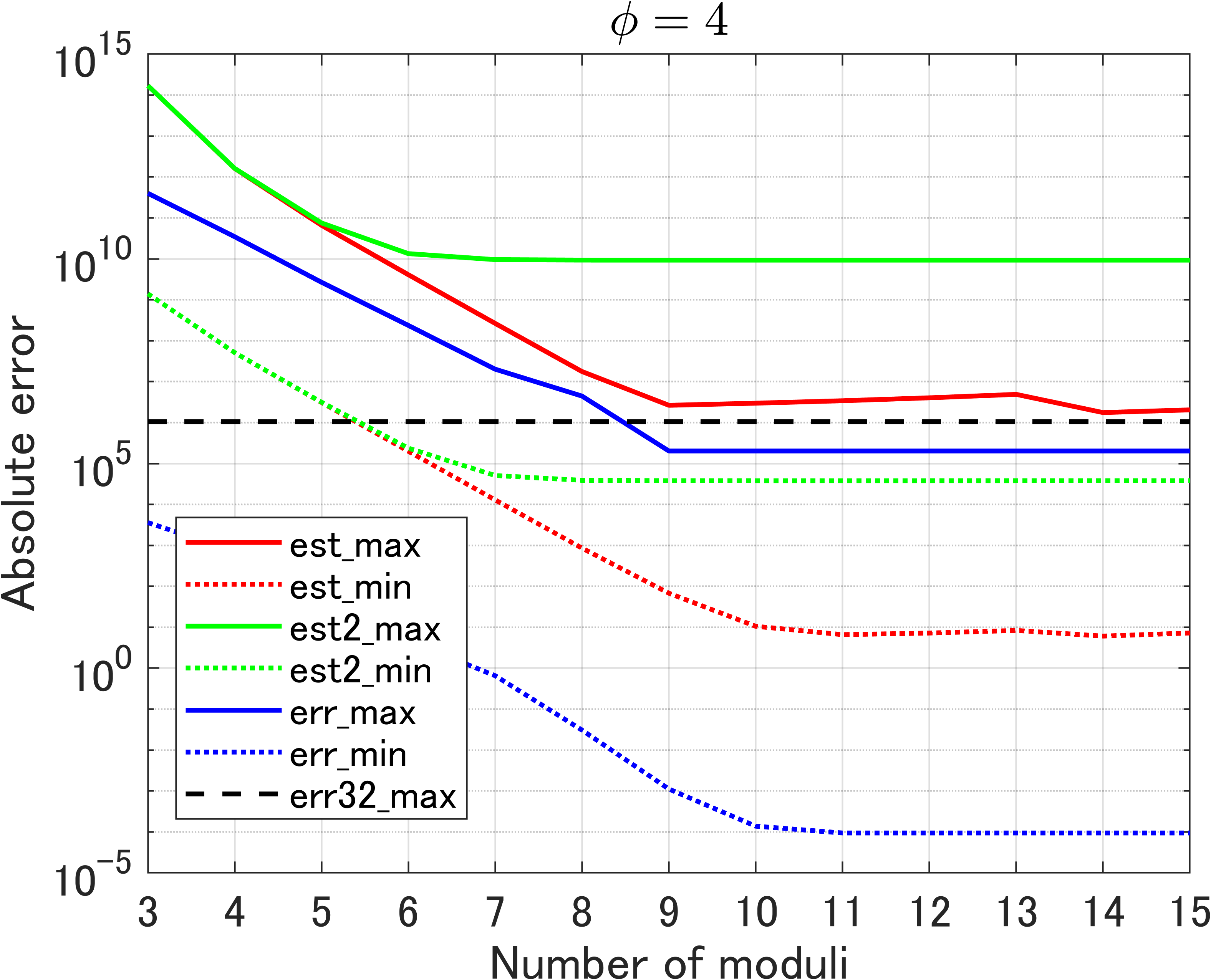}
    \end{minipage}
    
    \caption{Comparison between theoretical error bounds and observed numerical errors for SGEMM emulation. The meanings of the plotted lines are analogous to those in Fig.~\ref{fig:comparison_d}, with err32\_max being the maximum error of native SGEMM.}
    \label{fig:comparison_s}
\end{figure}

%====================
\section{Conclusion}
\label{sec:Conclusion}
%====================
This study provided a deterministic error analysis for SGEMM and DGEMM emulation based on the Ozaki-II scheme.
The proposed analysis theoretically clarifies how the exponent distribution of the input matrices and the number of moduli affect the resulting numerical errors, and explains the accuracy behavior previously observed in numerical experiments.
The present error analysis provides a foundation for developing methods that automatically adjust the number of moduli according to the exponent distribution of the input matrices in order to achieve the desired level of numerical accuracy.
A similar analytical framework can also be applied to the emulation of complex-valued matrix multiplication proposed in~\cite{uchino_ozaki2_complex}, which we leave as future work.

%====================
\section*{Acknowledgements}
%====================
This study was supported by the Japan Society for the Promotion of Science under Grant-in-Aid Numbers 25K03126, 23H03410 (Scientific Research B), and 24K23874 (Research Activity Start-up).

\bibliographystyle{elsarticle-num}
\bibliography{references}

\end{document}